\def\Rset{\mathbb{R}}
\def\Nset{\mathbb{N}}
\def\bfo{{\mathbf 1}}
\def\ta{\alpha^\star}
\def\tb{\beta^\star}
\def\cN{{\cal N}}
\def\cN{{\cal G}}
\def\cN{{\cal N}}
\newcommand{\comment}[1]{}
\renewcommand{\t}{^{\mbox{\tiny\sf T}}}
\newtheorem{theo}{Theorem}
\newtheorem*{theo*}{Theorem}
\newtheorem{defi}{Definition}
\newtheorem{assu}{Assumption}
\newtheorem*{prop*}{Proposition}
\newtheorem{prop}[theo]{Proposition}
\theoremstyle{definition}
\newtheorem{rema}{Remark}
\begin{document}
\title{Tight estimates for convergence of some non-stationary consensus algorithms}
\author{David Angeli\thanks{Dipartimento di Sistemi e Informatica, 
University of Florence, Via di S.\ Marta 3, 50139 Firenze, Italy and INRIA, Rocquencourt BP105, 78153 Le Chesnay cedex, France.
Email: {\tt angeli@dsi.unifi.it}} \and Pierre-Alexandre Bliman\thanks{INRIA, 
Rocquencourt BP105, 78153 Le Chesnay cedex, France.
Email: \tt 
pierre-alexandre.bliman@inria.fr}}\maketitle


\begin{abstract}
The present paper is devoted to estimating the speed of convergence towards consensus for a general class of discrete-time multi-agent systems.
In the systems considered here, both the topology of the interconnection graph and the weight of the arcs are allowed to vary as a function of time.
Under the hypothesis that some spanning tree structure is preserved along time, and that some nonzero minimal weight of the information transfer along this tree is guaranteed, an estimate of the contraction rate is given.
The latter is expressed explicitly as the spectral radius of some matrix depending upon the tree depth and the lower bounds on the weights.
\vspace{.2cm}

\noindent \underline{\bf Keywords:} multiagent systems; distributed consensus; convergence rate; linear time-varying systems; uncertain systems; stochastic matrices; Perron-Frobenius theory; mixing rates.
\end{abstract}

\section{Introduction}

Appeared in the areas of communication networks, control theory and parallel computation, the analytical study of ways for reaching consensus in a population of agents is a problem of broad
interest in many fields of science and technology; see \cite{ANG07} for references.
Of particular interest is the question of estimating how quickly consensus is
reached on the basis of few qualitative (mainly topological) information as well as basic quantitative information on the network (mainly the strength of reciprocal influences).
    
Originally, this problem was considered in the context of
stationary networks. For Markov chains that are homogeneous (that is stationary in the vocabulary of dynamical systems), it amounts to quantify the speed at which 
steady-state probability distribution is achieved, and is therefore directly related to finding an
a priori estimate to the second largest eigenvalue of a stochastic matrix.
Classical works on this subject are due to Cheeger and Diaconis, \cite{cheeger,diaconis}, see also
\cite{friedland} for improved bounds, as well as \cite{SIN,KAH} and \cite{ROS} for a survey.
The latter concern reversible Markov chains, for example when the transition matrix is symmetric, see e.g.\ \cite{FIL} for the non-reversible case.

Among the classical contributions which instead deal with time-varying interactions we refer to the work of Cohn, \cite{cohn}, where asymptotic convergence is proved, but neglecting the issue of relating topology and
guaranteed convergence rates.
Tsitsiklis {\em et al.} also provided important qualitative contributions to this subject \cite{TSI84,TSI86,BER}, as well as Moreau \cite{MOR04a}.
See also \cite{ANG} for further nonlinear results.
In particular, the role of connectivity of the communication graph in the convergence of consensus and spanning trees has been recognised and finely analysed \cite{MOR04a,CAO05,OLS}.

More recently, important contributions in characterizing convergence to  consensus in a time-varying set-up were proven by several authors, see for instance \cite{BER, MOR04a}.
See also \cite{CAO05,BOY} for more specific cases.


In a previous paper \cite{ANG07}, several criteria were provided to estimate quantitatively the contraction rate of a set of agents towards consensus, in a discrete time framework.
The attempt there consisted in following the spread of the information over the agent population, along one or more spanning-trees. Ensuring a lower bound to the matrix entries of the agents already attained by the information flow along the spanning-tree, rather than the nonzero contributions as classically, permitted to obtain tighter estimates with weaker assumptions. 
Distinguishing between different sub-populations, of agents already touched by spanning-tree and agents not yet attained, and using lower bounds on the influence of the former ones, one is able to establish rather precise convergence estimates.

As a matter of fact, rapid consensus can be obtained in two quite different ways --- either by dense and isotropic communications (based, say, on a complete graph), or by very unsymmetric and sparse relations (with a star-shaped graph with a leading root).
In the first case many spanning trees cover the graph, while in the second configuration a unique one does the job.

The present article is a continuation of \cite{ANG07}.
Emphasis is put on propagation of a unique spanning tree and on the resulting consequences in terms of convergence speed.
It is demonstrated that in the particular case where such a spanning tree structure is guaranteed to exist at any time, ensuring minimal weight to the transmission of information along the tree (from the root to the leafs) indeed enforces some minimal convergence rate, whose expression is particularly simple.
A worst-case estimate is provided, expressed as the spectral radius of certain matrix whose size equals the depth of the tree and whose coefficients depend in a simple way of the assumed minimal weights.
This results in a sensible improvement over existing evaluations.

The paper is organized as follows.
Section \ref{se2} contains the problem formulation and a presentation of the main result, together with the minimal amount of technical tools to allow for its comprehension.
A comparison system is introduced afterwards in Section \ref{se3}, whose study is central to establish the convergence estimate.
The original method for analysis of this system is used in Section \ref{se4} to get convergence rate estimate (therein is stated the main result of the paper, Theorem \ref{th2}), and some properties of the latter are studied.
This result is commented in Section \ref{se6}, before some concluding remarks.
The proofs are sent back to Appendix.

\subsection*{Notations}

The $i$-th vector of the canonical basis in the space $\Rset^n$ ($1\leq i\leq n$) is denoted $e^n_i$; the vector with all components equal to 1 in $\Rset^n$ is written $\bfo^n$.
When the context is clear, we omit the exponent and just write $e_i$, resp.\ $\bfo$ to facilitate reading.
We also use brackets to select components of vectors.
All these notation are standard, and for a vector $x\in\Rset^n$, the $i$-th component is written alternatively $x_i$, $[x]_i$, $(e_i^n)\t x$ or $e_i\t x$.

The systems considered here will be composed of $n$ agents: accordingly, we let $\cN\doteq\{1,\dots , n\}$.


As usual, identity and zero square matrices of dimension $q \times q$ are denoted $I_q$ and $0_q$ respectively.
We denote $J_q$ the $q\times q$ matrix with ones on the sub-diagonal and zeros otherwise: $(J_q)_{i,j}=\delta_{i=j+1}$.
Here, and later in the text, $\delta$ denotes the Kronecker symbol, equal to 1 (resp.\ 0) when the condition written in the subscript is fulfilled (resp.\ is not).
For self-containedness, recall that a real square matrix $M$ is said stochastic (row-stochastic) if it is nonnegative with each row sum equal to 1.

The spectral radius of a square matrix $M$ is denoted $\lambda_{\max} (M)$.
Last, we use the notion of nonnegative matrices, meaning real matrices which are componentwise nonnegative.
Accordingly, the order relations $\leq$ and $\geq$ envisioned for matrices are meant componentwise.

\section{Problem formulation and presentation of the main result}
\label{se2}

Our aim is to estimate the speed of convergence towards consensus for the following class of time-varying linear systems:
\begin{equation}
\label{tv}
x (t+1) = A(t) x(t)
\end{equation}
where $A(t)\doteq (a_{i,j}(t))_{(i,j)} \in \Rset^{n \times n}$ is a sequence of stochastic matrices
(in particular, $A(t) \bfo = \bfo$; this is exactly the dual of what happens in the case of non-homogeneous Markov chains, where the probability distribution, written as a row vector $\pi(t)$, verifies rather a relation like $\pi(t+1)=\pi(t)A(t)$).

Let us first introduce some technical vocabulary to present in simplest terms the main result of the paper, afterwards enunciated in Section \ref{se4}.
The definition of the quantity we intend to estimate is as follows.

\begin{defi}[Contraction rate]
\label{de4}
We call {\em contraction rate\/} of system \eqref{tv} the number $\rho\in [0,1]$ defined as:
\[
\rho \doteq \sup_{x(0)}\
\limsup_{t\to +\infty} \left ( \frac{\displaystyle\max_{i\in\cN}x_i(t)-\min_{i\in\cN}x_i(t)}{\displaystyle\max_{i\in\cN}x_i(0)-\min_{i\in\cN}x_i(0)}  \right )^{\frac{1}{t}}\ ,
\]
where the supremum is taken on those $x(0)$ for which the denominator is nonzero.
\end{defi}
The contraction rate is thus related to the speed of convergence to zero of the agent set diameter.
In what follows, the latter plays the role of a Lyapunov function to study convergence to agreement.
For stationary systems, as is well known, the number $\rho$ is indeed the second largest eigenvalue of the matrix $A$. More in general, it corresponds to the second largest Lyapunov exponent of the considered sequence of matrices $A(t)$.\\

\begin{defi}[Communication graph]
\label{de2}
We call {\em communication graph} of system \eqref{tv} {\em at time $t$} the directed graph defined by the ordered pairs $(j,i)\in\cN\times\cN$ such that $a_{i,j}(t)>0$.
\end{defi}
In the present context, we use indifferently the terms ``node" or ``agent".

We now introduce assumptions on the existence of a constant hierarchical structure embedded in the communication graph, and on minimal weights attached to the corresponding links.

\begin{assu}
\label{as1}
For a given positive integer $T_d>0$, called the {\em depth of the communication graph}, assume the existence of nested sets $\cN_0,\dots,\cN_{T_d}$ such that
\begin{itemize}
\item $\cN_0$ is a singleton (whose element is called the {\em root});
\item $\cN_k\subset\cN_{k+1}$;
\item $\cN_{T_d}=\cN=\{1,\dots, n\}$.
\end{itemize}

Assume in addition, for given nonnegative real numbers $\alpha,\beta,\gamma$, that, for all $t\geq 0$ and all $k \in \{1,2, \ldots, T_d \}$
\begin{subequations}
\label{ineq}
\begin{align}
\label{ineq1}
a_{i,i}(t) & \geq \alpha \quad \text{ if } i \in \mathcal{N}_0,\\
\label{ineq2}
\sum_{j\in\cN_{k}\setminus\cN_{k-1}} a_{i,j}(t)&
\geq \beta \quad\text{ if } i \in \mathcal{N}_k \backslash \mathcal{N}_{k-1},\\
\label{ineq3}
\sum_{j\in\cN_{k-1}} a_{i,j}(t) & \geq \gamma\ \text{ if } i \in \mathcal{N}_k \backslash \mathcal{N}_{k-1}.
\end{align}
\end{subequations}
\end{assu}

As an example, the sets $\cN_k$ may be induced by some fixed spanning tree embedded in the communication graph: the existence of a distinguished agent, the root, is presupposed and, although the matrices $A(t)$ and the underlying communication graphs are allowed some variations, information progress from this root along a (time-varying) tree to attain all the agents.
The number $T_d$ bounds from above the minimal time for the information to attain the most distant agents from the root.
Likely, we call $d_i\doteq\min\{ k\ :\ i\in\cN_k\}$ the {\em depth of agent $i$}.
The set $\cN_k$ indeed consists of all the agents $i$ whose depth $d_i$ is guaranteed by Assumption \ref{as1} to be at most equal to $k$.
An example of (fixed) communication graph and the associated nested sets is shown in Figure \ref{treewave}.
 \begin{figure}
 \centerline{
 \includegraphics[width=8cm]{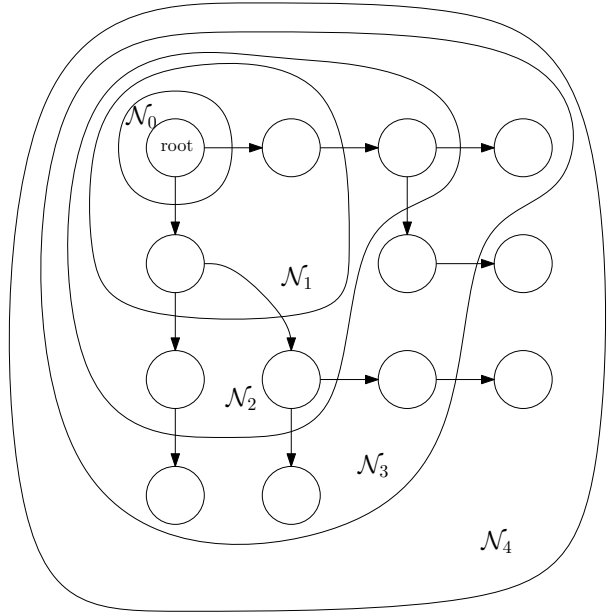} }
 \caption{The nested sets and the spanning tree}
 \label{treewave}
 \end{figure}

In addition to the spanning tree structure, Assumption \ref{as1} imposes some minimal weights to the information transmitted downstream along this structure (this is the role played by $\gamma$), and also to the information used between agents located at same depth.
Concerning the latter, expressed by condition \eqref{ineq2}, remark that it is fulfilled by self-loops, that is when
\[
a_{i,i}(t)\geq \beta \quad \text{ if } d_i>0
\]
(because by definition, $i\in\cN_{d_i}\setminus\cN_{d_i-1}$ for $d_i>0$); but it is indeed weaker: it allows just as well communications between agents whose depths are equal.
The constraint on the self-loops of the root agent, measured by $\alpha$, is different than for the other agents ($\beta$);
this is done on purpose, and permits to treat simultaneously the case of leaderless coordination and `pure' coordination with a leader 
(case corresponding to $\alpha = 1$).

Last, notice that, the matrices $A(t)$ being stochastic, one should have:
\[
\alpha,\ \beta+\gamma\leq 1
\]
for Assumption \ref{as1} to be fulfilled.\\

We are now in position to present the contents of Theorem \ref{th2}.
The latter states that, under the conditions exposed above, {\em the rate of convergence of system \eqref{tv} is {\em at most\/} equal to the spectral radius of the $T_d\times T_d$ matrix $\zeta_{T_d}(\alpha,\beta,\gamma)$ defined by}
\[
\zeta_{T_d}(\alpha,\beta,\gamma)
=
\begin{pmatrix}
\tb & 0 &  \dots & 0 & 1-\ta -\tb\\
\ta & \tb & \ddots & \vdots & 1-\ta-\tb\\
0 & \ta & \ddots & 0 & \vdots\\
\vdots & \ddots & \ddots & \tb & 1-\ta-\tb\\
0 & \dots & 0 & \ta & 1-\ta
\end{pmatrix},\quad
\text{ \em with } \ta=\min\{\alpha,\gamma\},\ \tb=\min\{\beta+\gamma,\alpha\}-\ta\ .
\]
A major characteristic of this estimate is that it is {\em independent\/} of the number $n$ of agents: it only depends upon $\alpha, \beta, \gamma$ and the depth $T_d$.\\

\comment{
Otherwise said, if the root uses at each time its own information with a weight at least $\alpha$; if moreover that at each time, every agents is at a `distance' at most $T_d$ of the root, and use with a weight at least $\gamma$, information received  from an agent located upstream, and its own information with a weight at least $\beta$, then the contraction rate is at most $\lambda_{\max}(\zeta_{T_d}(\alpha,\beta,\gamma))$.
}

We introduce, in the rest of the present Section, a general example where Assumption \ref{as1} is naturally fulfilled.

\begin{defi}[$(\alpha,\beta,\gamma)$-tree matrix]
For every nonnegative numbers $\alpha, \beta,\gamma$, we call {\em $(\alpha,\beta,\gamma)$-tree matrix} any matrix $M\in\Rset^{n\times n}$ defined by the recursion formula
\begin{equation*}
\label{ourmatrices}
M_1 = [\alpha],\quad
M_i = \left( \begin{array}{c|c} M_{i-1} & 0_{i-1} \\
\hline
\gamma u^{i-1} & \beta \end{array}
\right),\quad
M\doteq M_n\ ,
\end{equation*}
where, for all $i=2,\dots, n$, the vector $u^{i-1}$ is a vector of the canonical basis in $\Rset^{i-1}$.
\end{defi}

Notice that the agents have implicitly been numbered by the tree matrix representation: the pertinent information propagates from smaller to higher indexes.
A central case where Assumption \ref{as1} holds is given by the following result.

\begin{prop}
\label{pr1}
Let $A(t)$ be stochastic matrices.
Assume the existence of a sequence of $(\alpha,\beta,\gamma)$-tree matrices $M(t)\doteq (m_{i,j}(t))_{(i,j)} \in \Rset^{n \times n}$ such that
for all $t \in \Nset$
\[
\ A(t)\geq M(t)\ .
\]
Then, after some finite time, system \eqref{tv} fulfills Assumption {\rm\ref{as1}} with
\[
\cN_k = \{ i \in \cN : r_i \leq k \}
\]
and $T_d=r_n$ recursively defined as
\begin{equation*}
\label{ri}
r_1=0,\quad
r_i=1+\max\left\{
r_j\ :\ j<i \text{ and } m_{i,j}(t)>0 \text{ infinitely many times}
\right\},\ 2\leq i\leq n\ .
\end{equation*}
\end{prop}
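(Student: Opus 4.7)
The plan is to verify the three inequalities of Assumption \ref{as1} directly, exploiting the rigid structure of the $(\alpha,\beta,\gamma)$-tree matrix and the pointwise domination $A(t)\geq M(t)\geq 0$. First I would unpack the structure of $M(t)$: by the recursive definition, row $1$ of $M(t)$ has a single nonzero entry $m_{1,1}(t)=\alpha$, and for each $i\geq 2$, row $i$ contains exactly two nonzero entries, namely $m_{i,i}(t)=\beta$ and $m_{i,p_i(t)}(t)=\gamma$ for some unique ``parent'' index $p_i(t)\in\{1,\dots,i-1\}$ depending on $t$ (this is the role of the canonical vector $u^{i-1}$). The domination $A(t)\geq M(t)$ then yields the permanent bounds $a_{1,1}(t)\geq\alpha$ and $a_{i,i}(t)\geq\beta$ for $i\geq 2$, valid for \emph{every} $t$.

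These permanent bounds immediately discharge \eqref{ineq1} (since $\cN_0=\{1\}$, as $r_1=0$ and $r_i\geq 1$ for $i\geq 2$) and \eqref{ineq2} (since for any $i$ with $r_i=k$, the index $j=i$ lies in $\cN_k\setminus\cN_{k-1}$, so the sum in \eqref{ineq2} is bounded below by $a_{i,i}(t)\geq\beta$). The substantive part is \eqref{ineq3}, where the time-variation of the parent $p_i(t)$ matters. Here I would split, for each $i\geq 2$, the candidate parents into the \emph{persistent} set $P_i=\{j<i:m_{i,j}(t)>0\text{ infinitely often}\}$ and the \emph{transient} set $T_i=\{j<i\}\setminus P_i$. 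By the very definition of $r_i$, every $j\in P_i$ satisfies $r_j\leq r_i-1$, so $P_i\subset\cN_{r_i-1}$. Meanwhile, for each $j\in T_i$ there exists a finite time beyond which $m_{i,j}(t)=0$; since $\cN$ is finite, one may take the maximum of these times over all pairs $(i,j)$ to obtain a single $t_0$ such that, for all $t\geq t_0$, the parent $p_i(t)$ of every agent $i\geq 2$ lies in $P_i$, hence in $\cN_{r_i-1}$. For such $t$ and for $i\in\cN_k\setminus\cN_{k-1}$ (i.e.\ $r_i=k$),
\[
\sum_{j\in\cN_{k-1}}a_{i,j}(t)\;\geq\;a_{i,p_i(t)}(t)\;\geq\;m_{i,p_i(t)}(t)=\gamma,
\]
which establishes \eqref{ineq3}.

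Finally, the nested structure is built into the definition $\cN_k=\{i:r_i\leq k\}$: nestedness $\cN_k\subset\cN_{k+1}$ is immediate, $\cN_0=\{1\}$ as noted above, and $\cN_{T_d}=\cN$ holds by taking $T_d$ equal to the maximal depth attained by any agent (written $r_n$ by the authors under the natural convention that agents are numbered so that $r_i$ is non-decreasing in $i$). I do not expect any serious technical obstacle; the only point requiring a little care is the uniform choice of $t_0$, which relies on the finiteness of the index set $\cN$ and on the fact that each agent has only finitely many potential parents. Once this is observed, the three inequalities of Assumption \ref{as1} follow by inspection from the two lower bounds $a_{i,i}(t)\geq\beta$ (or $\alpha$) and $a_{i,p_i(t)}(t)\geq\gamma$ provided by the tree-matrix domination.
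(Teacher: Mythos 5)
Your proof is correct and follows essentially the same route as the paper's: the permanent diagonal bounds $a_{1,1}(t)\geq\alpha$, $a_{i,i}(t)\geq\beta$ dispose of \eqref{ineq1}--\eqref{ineq2}, and your persistent/transient split of the candidate parents is just a direct rephrasing of the paper's argument that, beyond some finite time $T_i$, the father $F_t(i)$ of node $i$ must lie in $\cN_{r_i-1}$ (the paper gets this by contradiction with the definition of $r_i$, you get it by noting $P_i\subset\cN_{r_i-1}$ and that transient parents eventually disappear). Your side remark that $T_d$ should be read as the maximal depth $\max_i r_i$ is a fair observation about the statement, and both your argument and the paper's implicitly assume $\gamma>0$ in identifying the father, so there is no substantive difference.
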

We emphasize the fact that the lower bound $M(t)$ may vary upon time.

By construction, $T_d$ in Proposition \ref{pr1} verifies: $1\leq T_d\leq n-1$, and does not depend upon the ordering of the matrices $M(t)$.
Moreover, it may be proved directly that in the particular case of constant $M$, $T_d$ is the depth of the associated graph (defined in Definition \ref{de2}); generally speaking, however, the depth of a tree matrix sequence is {\em at least\/} equal to the $\limsup$ of the depths of the individual matrices $M(t)$.
To prove both properties, it is sufficient to remark that, in the case of constant $M$, the previous formula indeed computes the depth of the associated graph.
Figure \ref{switchingtree} presents the case of two matrices for which the supremum of depths is equal to 2, that is strictly less than the depth of the sequence of matrices obtained by alternatively taking each of them, which is here equal to 3 (and also strictly less, in this case, than $n-1=4$).
One can take the numbers $d_i$ defined in Assumption \ref{as1} equal to the corresponding numbers $r_i$ given below, a quite natural choice which yields in the present case:
\[
d_1=0,\ d_2=1,\ d_3= 1+\max\{d_1,d_2\}=1+\max\{0,1\}=2,\
d_4=d_5=3\ .
\]

\begin{proof}[Proof of Proposition \protect\rm\ref{pr1}]
Let the family of sets $\mathcal{N}_k$ be defined as in the statement.
Clearly $\mathcal{N}_0 = \{ 1 \}$ is a singleton, and also $m_{1,1} (t) \geq \alpha$ for all $t \geq 0$ as desired.
Moreover, $\mathcal{N}_k \backslash \mathcal{N}_{k-1} = \{ i \in \mathcal{N}: r_i=k \}$. In particular then, $i \in \mathcal{N}_{r_i} \backslash \mathcal{N}_{r_i-1}$ for all $i>1$.
Since $m_{i,i} (t) \geq \beta$ for all $i > 1$ and all $t \geq 0$, it is straightforward to verify that
\[
\sum_{j \in \mathcal{N}_{r_i} \backslash \mathcal{N}_{r_i-1} } m_{i,j} (t) \geq m_{i,i} (t) \geq \beta \]
as desired, for Assumption $1$ to hold. 
Finally, let $i \in \mathcal{N}_k \backslash \mathcal{N}_{k-1}$ , viz. $r_i = k$. This yields $m_{i,j} (t) > 0$ for some $j \in \mathcal{N}_{k-1}$
for infinitely many times. Actually, more is true due to the special structure of tree matrices, namely $m_{i,j}(t) \geq \gamma$ for infinitely many $t$s. We claim that for all $t$ larger than some finite time $T_i$ there exists $j(t) \in \mathcal{N}_{k-1}$ such that $m_{i,j(t)} (t) \geq \gamma$.
Indeed, let $F_t(i) \in \{1,2, \ldots i-1 \}$ denote the {\em father\/} of the $i$-th node in the tree matrix $M(t)$, that is the unique index $j$ such that $M_{i,j}(t)>0$; clearly,
$m_{i,F_t(i)} (t) \geq \gamma$ for all $t$.
Indeed, for all sufficiently large $t$s, $F_t (i) \in \mathcal{N}_{k-1}$, (otherwise $F_t (i) \notin  \mathcal{N}_{k-1}$
infinitely many times and, therefore, we would have
$r_i \geq k +1$, which is a contradiction).
Let $T_d= \max_{i  \in \mathcal{N} } T_i$.
For all subsequent times, we have:
\[ \sum_{j \in \mathcal{N}_{k-1} } m_{i,j}(t) \geq m_{i,F_t(i)} (t) = \gamma. \] 
This concludes the proof of the Proposition.
\end{proof}

\begin{figure}
 \centerline{
 \includegraphics[width=4cm]{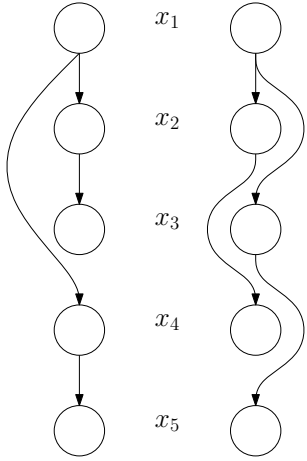} }
 \caption{Trees of depth $2$ inducing a nested structure of depth $3$.}
 \label{switchingtree}
 \end{figure}
 
\section{A comparison system for the diameters evolution}
\label{se3}

We now build an auxiliary time-varying system, with a simpler structure than \eqref{tv}, and with the property that the asymptotic contraction rate of the original system can be bounded from above by carrying out suitable computations on this newly introduced system.
Our main result for the present section is a statement relating convergence of \eqref{tv} towards consensus of a comparison system introduced below.

\begin{theo} 
\label{listofmatrix}
Assume system \eqref{tv} fulfills Assumption {\rm\ref{as1}}, for given nonnegative numbers $\alpha,\beta,\gamma$ (such that $\alpha,\beta+\gamma\leq 1$).
Let $\Delta(t)$ be defined by
\[
\Delta (t)\doteq
\begin{pmatrix}
\displaystyle\max_{i \in \cN_0} x_i(t)-\min_{i \in \cN_0} x_i(t)\\
\displaystyle\max_{i \in \cN_1} x_i(t)-\min_{i \in \cN_1} x_i(t)\\
\vdots \\
\displaystyle\max_{i \in \cN_{T_d}} x_i(t)-\min_{i \in \cN_{T_d}} x_i(t)
\end{pmatrix}\ .
\]

Then, $\Delta(t)$ satisfies the following inequality:
\begin{equation}
\label{ineqdiameter}
\Delta(t+1) \leq  \left(
\begin{array}{c|c}
1 & 0_{T_d\times 1}\\
\hline
\begin{matrix}
\ta \\ 0_{(T_d-1)\times 1}
\end{matrix} &
\zeta_{T_d} (\ta, \tb)
\end{array}
\right)\Delta(t)\ ,
\end{equation}
where $\zeta_{T_d} ( \ta,\tb)\in\Rset^{T_d\times T_d}$
\begin{gather}
\label{zeta}
 \zeta_{T_d} ( \ta,\tb) \doteq (1-\ta-\tb)\ \bfo e\t_{T_d} +\tb I_{T_d}+\ta J_{T_d},
\\
\label{tatb}
\ta\doteq\min\{\alpha,\gamma\},\quad \tb\doteq\min\{\beta+\gamma,\alpha\}-\ta\ .
\end{gather}
\end{theo}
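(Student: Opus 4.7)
The plan is to verify the inequality \eqref{ineqdiameter} row by row. The first row is vacuous, because $\cN_0$ is a singleton and therefore $\Delta_0(t)\equiv 0$. All the substantive content is to show, for each $k\in\{1,\dots,T_d\}$, a uniform pointwise bound
\[
x_i(t+1)\leq \ta M_{k-1}(t) + \tb M_k(t) + (1-\ta-\tb)M_{T_d}(t),\qquad i\in\cN_k,
\]
where $M_\ell(t)\doteq\max_{j\in\cN_\ell}x_j(t)$, together with its mirror counterpart $x_i(t+1)\geq \ta m_{k-1}(t)+\tb m_k(t)+(1-\ta-\tb)m_{T_d}(t)$ with $m_\ell(t)\doteq\min_{j\in\cN_\ell}x_j(t)$. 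Taking $\max$ and $\min$ over $i\in\cN_k$ and subtracting then yields the $k$-th row of \eqref{ineqdiameter}, since $\Delta_\ell=M_\ell-m_\ell$.

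To produce such a pointwise bound I partition the stochastic update
\[
x_i(t+1)=\sum_{j\in\cN_{k-1}}a_{i,j}(t)x_j(t)+\sum_{j\in\cN_k\setminus\cN_{k-1}}a_{i,j}(t)x_j(t)+\sum_{j\notin\cN_k}a_{i,j}(t)x_j(t)
\]
and denote the three total row-masses by $p_i,q_i,r_i$, with $p_i+q_i+r_i=1$. Majorising each of the three partial sums by the extremes of $x_j(t)$ over $\cN_{k-1}$, $\cN_k$, and $\cN_{T_d}=\cN$ respectively gives $x_i(t+1)\leq p_iM_{k-1}+q_iM_k+r_iM_{T_d}$. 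The nestedness $M_{k-1}\leq M_k\leq M_{T_d}$ then lets me absorb any slack by transferring weight to the larger majorants, so the uniform bound above reduces to establishing the two key inequalities
\[
p_i\geq\ta\qquad\text{and}\qquad p_i+q_i\geq\ta+\tb
\]
for every $i\in\cN_k$, and the symmetric argument with $m_\ell$'s yields the lower bound.

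Establishing these two uniform lower bounds is the main obstacle of the proof, and it requires a three-way case analysis on the depth $d_i$ of the agent $i$. When $d_i=0$, assumption \eqref{ineq1} gives $a_{i,i}(t)\geq\alpha$, and since $\cN_0\subseteq\cN_{k-1}$ this translates into $p_i\geq\alpha$. When $1\leq d_i<k$, assumptions \eqref{ineq2}--\eqref{ineq3} applied at level $d_i$, combined with the inclusion $\cN_{d_i}\subseteq\cN_{k-1}$, give $p_i\geq\beta+\gamma$. When $d_i=k$, \eqref{ineq2}--\eqref{ineq3} directly yield $p_i\geq\gamma$ and $q_i\geq\beta$. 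Taking the worst case over these three possibilities produces $p_i\geq\min\{\alpha,\gamma\}=\ta$ and $p_i+q_i\geq\min\{\alpha,\beta+\gamma\}=\ta+\tb$. The particular definitions of $\ta$ and $\tb$ in \eqref{tatb} are precisely the largest values for which both inequalities survive every case, which is why they appear in the statement.

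Finally, the boundary row $k=T_d$ is a degenerate version of the same argument: since $\cN_{T_d}=\cN$, the third partial sum is empty and $r_i=0$, so only the first two partial sums contribute. The absorption then merges the coefficients $\tb$ and $1-\ta-\tb$ into a single $1-\ta$, which matches the last row of the matrix in \eqref{ineqdiameter}. Assembling the rows for $k=0,1,\dots,T_d$ yields the claimed vector inequality.
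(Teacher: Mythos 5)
Your proof is correct and follows essentially the same route as the paper: split each stochastic row over the nested sets $\cN_{k-1}$, $\cN_k\setminus\cN_{k-1}$, $\cN\setminus\cN_k$, bound the partial sums by the corresponding maxima/minima, and use the monotonicity $M_{k-1}\leq M_k\leq M_{T_d}$ to transfer slack weight upward, which is exactly where $\ta=\min\{\alpha,\gamma\}$ and $\ta+\tb=\min\{\alpha,\beta+\gamma\}$ arise. The only organisational difference is that the paper writes pointwise bounds at each agent's own depth (root versus non-root) and then majorises the maximum of two convex combinations, whereas you lower-bound the masses $p_i$ and $p_i+q_i$ directly at level $k$ through the three-way case split on $d_i$; the two formulations are equivalent.
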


Recall that inequality \eqref{ineqdiameter} is meant componentwise.
A complete proof of Theorem \ref{listofmatrix} is provided in Section \ref{pr_list}.

\begin{rema}
Two special cases of interest as far as application of Theorem \ref{listofmatrix} are obtained for the following values of parameters:
\begin{enumerate}
\item $\alpha=1$: viz. communication graph admits a {\em leader}; under such premises, expressions for $\ta$ and $\tb$ simplify as follows:
\[ \ta= \gamma \qquad \tb= \beta \]
\item $\alpha = \beta$, viz. root agent is not different from any other member of the group in terms of self-confidence on his own position
in the formation of consensus:
\[  \ta = \min \{ \beta, \gamma \} \qquad \tb = \max \{0, \beta - \gamma \} \]
\end{enumerate}
\end{rema}

\section{Convergence rate estimate and properties}
\label{se4}

Based on Theorem \ref{listofmatrix}, we now provide Theorem \ref{th2}, which states properly the property announced in the beginning of the paper.

\begin{theo}
\label{th2}
Consider the linear time-varying dynamical system \eqref{tv}, with $A(t)$ stochastic.
Assume Assumption {\rm\ref{as1}} is fulfilled.
Then, the contraction rate towards consensus can be bounded according to the
following formula:
\begin{equation}
\label{spectralbound}
\rho \leq \rho_{T_d} (\ta,\tb)\doteq \lambda_{\max} (\zeta_{T_d}(\ta,\tb))  \ ,
\end{equation}
with $\zeta_{T_d}, \ta, \tb$ given in \eqref{zeta} and \eqref{tatb}.
\end{theo}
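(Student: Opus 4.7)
The plan is to derive Theorem~\ref{th2} directly as a consequence of Theorem~\ref{listofmatrix}, using only the Gelfand spectral-radius formula and the nonnegativity of $\zeta_{T_d}(\ta,\tb)$. Denote by $B\in\Rset^{(T_d+1)\times(T_d+1)}$ the augmented matrix appearing on the right-hand side of \eqref{ineqdiameter}. Because $\cN_0$ is a singleton, the first component $\Delta_0(t)$ of the diameter vector $\Delta(t)$ vanishes identically, so the first row of \eqref{ineqdiameter} is trivial and the coupling entry $\ta$ in the first column does not contribute. Writing $\Delta'(t)\doteq (\Delta_1(t),\dots,\Delta_{T_d}(t))\t$, the inequality \eqref{ineqdiameter} thus collapses to the componentwise relation
\[
\Delta'(t+1)\ \leq\ \zeta_{T_d}(\ta,\tb)\,\Delta'(t)\ .
\]

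Since $\zeta_{T_d}(\ta,\tb)$ is entrywise nonnegative, iterating preserves the direction of the inequality, giving $\Delta'(t)\leq \zeta_{T_d}(\ta,\tb)^t\,\Delta'(0)$ for every $t\geq 0$. Next, because $\cN_k\subset\cN_{T_d}=\cN$ for all $k$, each coordinate of $\Delta'(0)$ is bounded by the full population diameter $\Delta_{T_d}(0)$, so that $\Delta'(0)\leq \Delta_{T_d}(0)\,\bfo$. Extracting the last component, which is exactly the agent-set diameter $\max_{i\in\cN}x_i(t)-\min_{i\in\cN}x_i(t)$, yields
\[
\max_{i\in\cN}x_i(t)-\min_{i\in\cN}x_i(t)\ \leq\ \Delta_{T_d}(0)\ \cdot\ e_{T_d}\t\,\zeta_{T_d}(\ta,\tb)^t\,\bfo\ .
\]

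To conclude, fix any matrix norm $\|\cdot\|$ on $\Rset^{T_d\times T_d}$, and bound the scalar $e_{T_d}\t\,\zeta_{T_d}(\ta,\tb)^t\,\bfo$ by $\|e_{T_d}\|\cdot\|\bfo\|\cdot\|\zeta_{T_d}(\ta,\tb)^t\|$. Gelfand's formula gives $\|\zeta_{T_d}(\ta,\tb)^t\|^{1/t}\to \lambda_{\max}(\zeta_{T_d}(\ta,\tb))$, and the two constants $\|e_{T_d}\|,\|\bfo\|$ disappear in the $t$-th root as $t\to\infty$. Taking the $\limsup$ and noting that the bound is uniform in the initial condition $x(0)$ (the dependence being concentrated entirely in the factor $\Delta_{T_d}(0)$, which appears symmetrically in the definition of $\rho$), one obtains
\[
\rho\ \leq\ \lambda_{\max}(\zeta_{T_d}(\ta,\tb))\ ,
\]
as claimed. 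The entire nontrivial work has been absorbed in Theorem~\ref{listofmatrix}, so no genuine obstacle remains at this stage; the only point requiring care is verifying that each manipulation of the componentwise inequalities is justified by nonnegativity of $\zeta_{T_d}(\ta,\tb)$ and of the vector $\bfo$ used as majorant of $\Delta'(0)$.
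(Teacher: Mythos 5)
Your proof is correct and takes essentially the same approach as the paper's: iterate the nonnegative comparison inequality from Theorem~\ref{listofmatrix}, bound the initial vector componentwise by the full diameter times $\bfo$, extract the last component, and pass to the spectral radius via Gelfand's formula. The only cosmetic difference is that you discard the trivial component associated with the singleton $\cN_0$ at the outset, whereas the paper keeps the augmented $(T_d+1)\times(T_d+1)$ matrix and lets its block-triangular structure eliminate the coupling term $\ta$.
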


Proof of Theorem \ref{th2} is given in Section \ref{pr_th2}.
Recall that stochasticity of $A(t)$ implies that the nonnegative scalar $\alpha, \beta, \gamma$ verify: $\alpha\leq 1$, $\beta+\gamma\leq 1$.

Theorem \ref{th2} provides a tight estimate for the contraction rate of \eqref{tv} on the basis of the parameters $\alpha$, $\beta$ and $\gamma$, and of the depth $T_d$ of the sequence of tree matrices.
We emphasize the fact that the result holds for time-varying systems.
Indeed, Theorem \ref{th2} is an inherently robust result, as Assumption \ref{as1} allows for much uncertainty in the definition of system \eqref{tv}.
This robustness is meant with respect to variations of the communication graph (provided these variations don't violate the set conditions of Assumption \ref{as1}), and with respect to variations of the coefficients of the matrix $A(t)$ (provided they respect the quantitative constraints in Assumption \ref{as1}).

A central fact is that the value in \eqref{spectralbound} does not depend upon the number of agents involved in the network: rather the depth of the graph is involved, which is quite natural.\\
 
 
Some properties of the estimate are now given.
They are indeed useful to have a grasp on the asymptotic behaviour of the contraction estimate, as well as on their monotonicity properties; the latter are in agreement with the increase of decrease of information available by varying the parameters $\alpha, \beta $ and $\gamma$. 

\begin{theo}
\label{th3}
Let $\ta,\tb\in (0,1]$.
Then for any $T\in\Nset$, $\rho_T(\ta,\tb)=\lambda_{\max} (\zeta_T(\ta,\tb))$ has the following properties.

\begin{itemize}
\item
$\rho_T(\ta,\tb)$ is the largest {\em real\/} root of the polynomial equation
\begin{equation}
\label{real}
\left(
\frac{s-\tb}{\ta}
\right)^T
+\left(
\frac{s-\tb}{\ta}
\right)^{T-1}
+\dots
+\frac{s-\tb}{\ta}
+1
=
\left(
\frac{1-\tb}{\ta}
\right)
\left(
\left(
\frac{s-\tb}{\ta}
\right)^{T-1}
+\dots +\frac{s-\tb}{\ta}+1
\right)\ .
\end{equation}
\item
For any $T \in\Nset$, $\rho_T(\ta,\tb)\leq \rho_{T+1}(\ta,\tb)$.
\item
For any $T \in\Nset$, $1-\ta,\tb< \rho_T(\ta,\tb)<1$.
\item
$ \rho_T(\ta,\tb)\leq\ta+\tb$ if and only if $T\leq\frac{\ta}{1-\ta-\tb}$.
\item
$\rho_T(\ta,\tb)\to 1$ when $T \to +\infty$, and more precisely
\[
\rho_T(\ta,\tb)= 1-(1-\ta-\tb)\left(
\frac{\ta}{1-\tb}
\right)^T
+ o\left(\left(
\frac{\ta}{1-\tb}
\right)^T\right)\ .
\]
\end{itemize}
\end{theo}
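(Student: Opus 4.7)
My plan is to reduce every assertion to a direct analysis of the characteristic polynomial $p_T(s)\doteq\det(sI-\zeta_T(\ta,\tb))$, by producing two complementary closed-form expressions for it. Writing $\zeta_T=\tb I+\ta J_T+(1-\ta-\tb)\bfo e_T\t$ and applying the matrix determinant lemma to this rank-one perturbation of the lower bidiagonal matrix $B(s)\doteq(s-\tb)I-\ta J_T$ (whose determinant is $(s-\tb)^T$ and whose inverse action on $\bfo$ is obtained by back-substitution), I would get
\[
p_T(s)=(s-\tb)^T-(1-\ta-\tb)\sum_{j=0}^{T-1}\ta^j(s-\tb)^{T-1-j}.
\]
Dividing through by $\ta^T$ and setting $u=(s-\tb)/\ta$ converts this to precisely equation \eqref{real}. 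The matrix $\zeta_T$ is nonnegative and irreducible (its digraph is strongly connected via the subdiagonal $\ta$'s and the rightmost column entries $1-\ta-\tb$), so Perron--Frobenius identifies the spectral radius $\rho_T$ with the largest real root of \eqref{real}; this settles the first item.

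I would then establish the third item as a prerequisite for the others, by evaluating $p_T$ at three key points: $p_T(\tb)=-(1-\ta-\tb)\ta^{T-1}<0$, $p_T(1)=\ta^T>0$ (after telescoping a geometric sum), and $p_T(1-\ta)=-\ta(1-\ta-\tb)\sum_{k=0}^{T-2}\ta^k(1-\ta-\tb)^{T-2-k}<0$ for $T\ge 2$. Together with $\rho_T\le 1$ from substochasticity (strict by irreducibility, since the first row of $\zeta_T$ has sum $1-\ta<1$), this yields $1-\ta<\rho_T<1$, and $\tb<\rho_T$ follows from $\tb<1-\ta$. The second item then falls out of the recursion
\[
p_{T+1}(s)=(s-\tb)^T(s-1)+\ta\,p_T(s),
\]
obtained by extracting the $j=0$ term of the sum defining $p_{T+1}$: evaluated at $s=\rho_T$ it gives $p_{T+1}(\rho_T)=(\rho_T-\tb)^T(\rho_T-1)<0$ thanks to the bounds just established, so the largest real root $\rho_{T+1}$ of the monic polynomial $p_{T+1}$ strictly exceeds $\rho_T$.

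For items four and five I would exploit the factorization
\[
(1-s)(s-\tb)^T-(1-\ta-\tb)\ta^T=-\bigl(s-(\ta+\tb)\bigr)\,p_T(s),
\]
which is immediate by checking both sides at $s=\ta+\tb$ and matching leading coefficients. Setting $h_T(s)\doteq(1-s)(s-\tb)^T$, direct differentiation shows $h_T$ is strictly unimodal on $[\tb,1]$ with maximum at $s^\star=(T+\tb)/(T+1)$, so the equation $h_T(s)=(1-\ta-\tb)\ta^T$ has at most two solutions there; one is always the spurious root $\ta+\tb$, and since $\rho_T\in(\tb,1)$ from item three, the other must be $\rho_T$. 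The elementary equivalence $\ta+\tb\lessgtr s^\star\iff T\gtrless\ta/(1-\ta-\tb)$ then yields item four, as $\rho_T$ lies on the side of $s^\star$ opposite to $\ta+\tb$. For item five, $s^\star\to 1$ together with $\rho_T\in(s^\star,1)$ for large $T$ forces $\rho_T\to 1$; substituting $\rho_T-\tb=(1-\tb)(1+o(1))$ into the relation $(1-\rho_T)(\rho_T-\tb)^T=(1-\ta-\tb)\ta^T$ delivers the announced expansion, after using $T(\ta/(1-\tb))^T\to 0$ (since $\ta/(1-\tb)<1$ because $\ta+\tb<1$) to absorb the $o(1)$ perturbation inside the $T$-th power.

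The main obstacle, in my view, will be handling cleanly the spurious root $s=\ta+\tb$ introduced by the factorization, especially at the boundary value $T=\ta/(1-\ta-\tb)$, where $\ta+\tb=s^\star$ is a double root of $h_T(s)=(1-\ta-\tb)\ta^T$ and in fact coincides with $\rho_T$ itself; a short limiting argument, or a direct check via $p_T(\ta+\tb)=\ta^{T-1}[\ta-T(1-\ta-\tb)]$, will be needed to cover this degenerate case.
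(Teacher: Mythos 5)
Your proposal is correct, and its backbone coincides with the paper's: your $p_T$ is exactly the paper's closed form for $\chi_T(s)=\det\bigl(sI_T-\zeta_T(\ta,\tb)\bigr)$ (obtained there by expanding along the first column, which gives the recursion $\chi_T(s)=(s-\tb)\chi_{T-1}(s)-{\ta}^{T-1}(1-\ta-\tb)$, rather than via the matrix determinant lemma), and your relation $(1-\rho_T)(\rho_T-\tb)^T=(1-\ta-\tb)\,{\ta}^T$ is precisely the paper's identity \eqref{iden}. The genuine differences are in the second and fourth items. For monotonicity in $T$ the paper simply plugs $s=\rho_{T-1}$ into its recursion and reads off $\chi_T(\rho_{T-1})\leq 0$, needing no prior bounds, while you use the alternative recursion $p_{T+1}(s)=(s-\tb)^T(s-1)+\ta\,p_T(s)$ together with the bounds of item three; both work. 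For item four the paper only records the sign of $\chi_T(\ta+\tb)$, which gives one implication directly but leaves the converse implicit; your combination of the factorization $(s-\ta-\tb)\,p_T(s)=(s-1)(s-\tb)^T+(1-\ta-\tb){\ta}^T$ with the strict unimodality of $h_T(s)=(1-s)(s-\tb)^T$ on $[\tb,1]$ (peak at $s^\star=(T+\tb)/(T+1)$) yields the full equivalence, degenerate case $T=\ta/(1-\ta-\tb)$ included, and it also gives a cleaner route to $\rho_T\to 1$ than the paper's dichotomy argument. Two repairs are needed, neither serious. First, the factorization is not ``immediate by checking both sides at $s=\ta+\tb$ and matching leading coefficients'': two conditions cannot certify an identity between polynomials of degree $T+1$. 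Prove it instead by multiplying your formula for $p_T$ by $s-\tb-\ta$ and telescoping the geometric sum, $(s-\tb-\ta)\sum_{j=0}^{T-1}{\ta}^j(s-\tb)^{T-1-j}=(s-\tb)^T-{\ta}^T$; this is exactly how the paper passes from \eqref{real} to \eqref{iden}. Second, in the asymptotic expansion you must justify $\bigl((\rho_T-\tb)/(1-\tb)\bigr)^T\to 1$: your inequality $\rho_T>s^\star$ gives $(\rho_T-\tb)^T\geq (1-\tb)^T\bigl(T/(T+1)\bigr)^T\geq e^{-1}(1-\tb)^T$, hence $1-\rho_T\leq e\,(1-\ta-\tb)\bigl(\ta/(1-\tb)\bigr)^T$ and so $T(1-\rho_T)\to 0$, which is exactly what your ``absorption'' step requires --- make this explicit (you are then in fact more careful than the paper, which asserts the corresponding equivalence without proof). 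Finally, note that, like the paper, you implicitly assume $\ta+\tb<1$ (if $\ta+\tb=1$ the matrix $\zeta_T$ is triangular, irreducibility fails, and the strict bounds degenerate), and that for $T=1$ one has $\rho_1=1-\ta$ exactly, so your restriction of the inequality $p_T(1-\ta)<0$ to $T\geq 2$ is the correct reading of the third item.
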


Theorem \ref{th3} is demonstrated in Section \ref{pr_th3}.

The following result, demonstrated in Section \ref{pr_th4}, studies the variation of $\rho_T$ as a function of $\alpha, \beta, \gamma$.
When considering $\rho_T$ as a function of these quantities, we write $\rho_T(\alpha, \beta, \gamma)$, meaning $\rho_T(\ta,\tb)$ for $\ta(\alpha,\beta,\gamma),\tb(\alpha,\beta,\gamma)$ defined as in \eqref{tatb}.

\begin{theo}
\label{th4}
For any $T\in\Nset$,
\begin{itemize}
\item
the function $(\ta,\tb)\mapsto\rho_T(\ta,\tb)$ is nonincreasing on the set $\{(\ta,\tb)\in [0,1]^2\ :\ \ta+\tb\leq 1\}$; 
\item
the function $(\alpha,\beta,\gamma)\mapsto\rho_T(\alpha,\beta,\gamma)$ is nonincreasing on the set $\{(\alpha,\beta,\gamma)\in [0,1]^3\ :\ \beta+\gamma\leq 1\}$;
\item
if $\beta+\gamma=\beta'+\gamma'$, then $\rho_T(\alpha,\beta,\gamma)\leq\rho_T(\alpha,\beta',\gamma')$ when $\beta\geq\beta'$.
\end{itemize}
Moreover, for any $T\in\Nset$,
\begin{itemize}
\item
$\rho_T(\alpha,\beta,\gamma)=1$ if and only if $\alpha=0$ or $\gamma=0$.
\item
$\rho_T(\alpha,\beta,\gamma)=\beta=1-\gamma$ if and only if $\alpha=\beta+\gamma=1$.
\end{itemize}
\end{theo}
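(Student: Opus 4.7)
My plan is to derive all five items of Theorem~\ref{th4} from a single structural lemma: the positive right Perron eigenvector $v=(v_1,\dots,v_T)\t$ of $\zeta_T(\ta,\tb)$, paired with $\rho=\rho_T(\ta,\tb)$, is componentwise nondecreasing when $\rho<1$ (as guaranteed by Theorem~\ref{th3}). I would establish this by writing $\zeta_T v=\rho v$ row by row: the first row gives $(\rho-\tb)v_1=(1-\ta-\tb)v_T$, the middle rows give the linear recursion $\ta v_{i-1}=(\rho-\tb)v_i-(1-\ta-\tb)v_T$, and the last row gives $\ta v_{T-1}=(\rho-1+\ta)v_T$. Solving this recursion with the normalisation $v_T=1$ yields the closed form
\[
v_{i+1}-v_i \;=\; \frac{1-\rho}{\ta}\left(\frac{\rho-\tb}{\ta}\right)^{T-i-1}\geq 0 ,
\]
so that $0=v_0\leq v_1\leq\dots\leq v_T$ (the degenerate cases $\ta=0$, giving $\rho_T=1$, and $\ta+\tb=1$, giving $\zeta_T=\tb I+\ta J$ with spectrum $\{\tb\}$, are handled separately).

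Two Collatz--Wielandt comparisons then do all the work. For Part~1, take $(\ta',\tb')$ with $\ta'\geq\ta$, $\tb'\geq\tb$ and $\ta'+\tb'\leq 1$. From \eqref{zeta},
\[
\zeta_T(\ta',\tb')\,v-\rho\,v \;=\; (\tb'-\tb)(v-v_T\bfo) + (\ta'-\ta)(Jv-v_T\bfo)
\]
is componentwise $\leq 0$, because $v_i\leq v_T$ and $(Jv)_i\leq v_T$ for every $i$ (since $(Jv)_1=0$ and $(Jv)_i=v_{i-1}$ otherwise). The max--min form of Collatz--Wielandt then gives $\rho_T(\ta',\tb')\leq\rho_T(\ta,\tb)$. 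Restricting the perturbation to the slice $\ta+\tb=\mathrm{const}$ reduces the change in the matrix to $\delta(J-I)$, and the same eigenvector monotonicity gives $[\delta(J-I)v]_i=\delta(v_{i-1}-v_i)\leq 0$; hence on any such slice $\rho_T$ is a nonincreasing function of $\ta$. Unwinding the substitutions $\ta=\min\{\alpha,\gamma\}$ and $\ta+\tb=\min\{\alpha,\beta+\gamma\}$ yields Part~3.

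Part~2 then follows by interpolation. From $(\alpha,\beta,\gamma)\geq(\alpha',\beta',\gamma')$ componentwise one reads off both $\ta\geq\ta'$ and $\ta+\tb\geq\ta'+\tb'$; the admissible two-leg path $(\ta',\tb')\to(\ta',\,\ta+\tb-\ta')\to(\ta,\tb)$, consisting of a Part~1 step (raising $\tb$ at fixed $\ta=\ta'$) followed by a slice step (raising $\ta$ at fixed $\ta+\tb$), lowers $\rho_T$ at each leg. Finally, the two characterizations are immediate: $\rho_T=1$ is equivalent to $\ta=0$, i.e.\ $\alpha=0$ or $\gamma=0$, since then $\zeta_T=\tb I+(1-\tb)\bfo e_T\t$ is upper triangular with a $1$ at position $(T,T)$, while $\ta>0$ forces $\rho_T<1$ by Theorem~\ref{th3}; and $\rho_T=\beta=1-\gamma$ holds exactly when $\alpha=\beta+\gamma=1$, where $(\ta,\tb)=(\gamma,\beta)$, $\ta+\tb=1$, and $\zeta_T=\tb I+\ta J$ has spectrum $\{\beta\}$, while the strict inequalities $\rho_T>\tb$ and $\rho_T>1-\ta$ from Theorem~\ref{th3} preclude $\rho_T=\beta$ whenever $\alpha<1$ or $\beta+\gamma<1$. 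The only technically loaded step is the eigenvector monotonicity; everything thereafter is routine linear algebra.
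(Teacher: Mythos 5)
Your proposal is correct in substance and rests on the same structural fact as the paper's own proof: the Perron eigenvector of $\zeta_T(\ta,\tb)$ lies in the cone $K_T$ of nonnegative, nondecreasing vectors, and the monotonicity statements then follow from a Collatz--Wielandt comparison, with the slice argument (fixed $\ta+\tb$, perturbation $\delta(J_T-I_T)$) handling the $\beta$/$\gamma$ trade-off. Where you genuinely differ is in how the key lemma and the second bullet are obtained: the paper gets $\hat w\in K_T$ abstractly, by checking $\zeta_T(\ta,\tb)K_T\subseteq K_T$ and invoking Perron--Frobenius theory for invariant cones \cite{VAN} together with irreducibility (Remark \ref{convexfreedom2}), whereas you solve the eigenvector recursion explicitly and exhibit $v_{i+1}-v_i=\frac{1-\rho}{\ta}\bigl(\frac{\rho-\tb}{\ta}\bigr)^{T-i-1}\geq 0$; and for the second bullet you replace the paper's bookkeeping of the partial derivatives of $(\ta,\tb)$ with respect to $(\alpha,\beta,\gamma)$ and its three-case discussion in $\gamma$ by the two-leg path $(\ta',\tb')\to(\ta',\ta+\tb-\ta')\to(\ta,\tb)$, which is cleaner. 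The explicit eigenvector buys a self-contained, elementary argument (no cone-theoretic citation) plus quantitative information; the paper's cone argument is shorter and sidesteps the degenerate parameter values you must treat separately. Note also that your slice computation yields exactly what the paper's proof yields for the third bullet, namely $\rho_T$ nondecreasing in $\beta$ at fixed $\beta+\gamma$, i.e.\ $\rho_T(\alpha,\beta,\gamma)\geq\rho_T(\alpha,\beta',\gamma')$ when $\beta\geq\beta'$; this is the reverse of the inequality as printed in the statement (apparently a typo there), so ``unwinding the substitutions'' should be read with that orientation.

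Two small repairs are needed. First, the comparison direction you chose requires a strictly positive eigenvector: from $\zeta_T(\ta',\tb')v\leq\rho v$ one cannot conclude $\rho_T(\ta',\tb')\leq\rho$ by the max--min (lower-bound) form of Collatz--Wielandt that you name; you need the min--max form $\rho(A)=\min_{w>0}\max_i [Aw]_i/w_i$ (or an iteration of powers), hence $v>0$, which your own formula does provide when $\ta>0$ and $\ta+\tb<1$ since $v_1=(1-\ta-\tb)v_T/(\rho-\tb)>0$ --- say so explicitly; alternatively, perturb downward from the eigenvector of the smaller spectral radius, as the paper does, which needs only $v\geq 0$ and the max--min form of \cite{RHE}. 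Second, in the last two bullets you invoke the strict inequalities of Theorem \ref{th3}, which is stated only for $\ta,\tb\in(0,1]$; the boundary case $\tb=0$ (e.g.\ $\alpha\leq\gamma$, or $\beta=0$) falls outside its hypotheses and must be argued separately, for instance via the recursion for $\chi_T$ or by irreducible substochasticity of $\zeta_T(\ta,0)$. The paper is equally loose on this point, but the edge case is not vacuous: for $T=1$, $\alpha=\gamma<1$, $\beta+\gamma=1$ one has $\rho_1=1-\alpha=\beta=1-\gamma$, so the ``only if'' of the last bullet needs $T\geq 2$ (or a supplementary argument) to go through.
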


 Notice that the estimates given in the last two points of Theorem \ref{th4} are tight: they are reached for the following {\em stationary\/} systems:
 
\begin{align*}
\text{Case $\alpha=0$:} & \quad A=J_n +e_1e_n\t\\
\text{Case $\gamma=0$:} & \quad A=I_n\\
\text{Case $\alpha=\beta+\gamma=1$:} & \quad A=\beta I_n+(1-\beta)(J_n+e_1e_1\t) 
\end{align*}

\section{Discussion and interpretation of the results}
\label{se6}


It is interesting to compare our results with the classical estimate $\rho \leq \sqrt[T_d]{ 1 - \alpha^{T_d} }$ which is obtained by assuming a lower-bound $\alpha$ on the diagonal entries as well as on the non-zero entries of $A(t)$. In our set-up this is obtained by letting $\alpha= \gamma =
\beta=\ta$ and $\tb=0$. 
In order to have an idea on the quality of the two estimates, we plot the ratio of the \emph{spectral gaps}, 
 \[ \frac{1 - \rho_{T_d }( \ta, 0 ) }{1 - \sqrt[T_d]{ 1 - {\ta}^{T_d} }} \]
for $T_d=2,3,4$ in Fig. \ref{betazero}. As it is possible to see, the new estimates are consistently tighter than the classic ones; in the best case,
viz. for $\ta \approx 0$, the ratio of spectral gaps approaches $T_d$. So, the quality of the estimates actually improves with respect to the classic bound, as the horizon $T_d$ increases. 
\begin{figure}
\centerline{
\includegraphics[width=14cm]{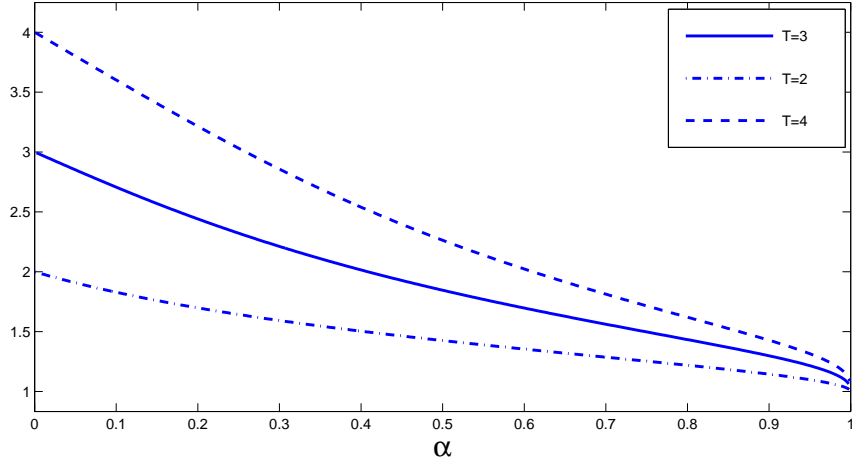}}
\caption{Ratios between spectral gaps}
\label{betazero}
\end{figure}

When additional information is available, for instance when the coefficient $\alpha, \beta, \gamma$ as given in (\ref{ineq}) are known, then contraction rate estimates become much tighter with respect to their classical counterparts which are not able to discriminate between inner loops of the root node and inner-loops of individual agents, as well as strength of inter-agent communication links.
In order to carry out a comparison, notice that under the assumption of a prescribed $\alpha, \beta, \gamma$ tree matrix bounding from below $A(t)$, we may assume for the classical estimate the following value of $\alpha := \min \{ \alpha, \beta, \gamma \}$ which indeed is always smaller than $\ta=\min \{ \alpha, \gamma \}$.
Hence, the corresponding spectral gaps satisfy:
\[
1 - \sqrt[T_d]{1 -{\min \{ \alpha, \beta, \gamma \} }^{T_d}} \leq 1- \sqrt[T_d]{1 - \min \{ \alpha, \gamma \}^{T_d} } 
\]
so that, we may compare the classical estimate with the new one by considering the following ratios:
\[ \frac{1 - \rho_{T_d} ( \ta, \tb )}{ 1 - \sqrt[T_d]{1 -{\min \{ \alpha, \beta, \gamma \} }^{T_d}} } \geq \frac{ 1- \rho_{T_d} (\ta, \tb) }{1-\sqrt[T_d]{1 - {\ta}^{T_d} } } \]

We plotted the function at the right-hand side of the previous inequality
 in a $\log_{10}$ scale as a function of $\ta$ and $\tb$. In general the ratio depends critically on the tree depth $T_d$, hence we only plot it for relatively small tree depths.
In particular the results shown in Fig. \ref{compa1} were obtained.
\begin{figure}
\centerline{
\includegraphics[width=10cm]{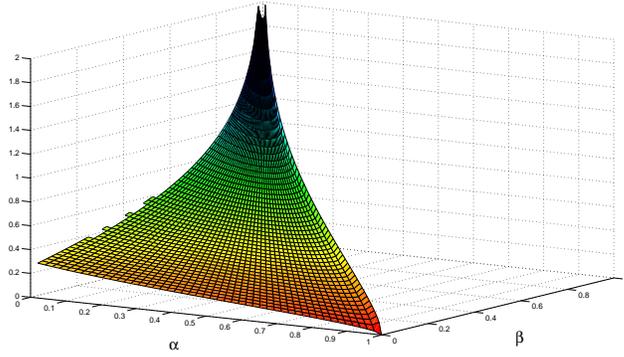} (a)}
\centerline{
\includegraphics[width=10cm]{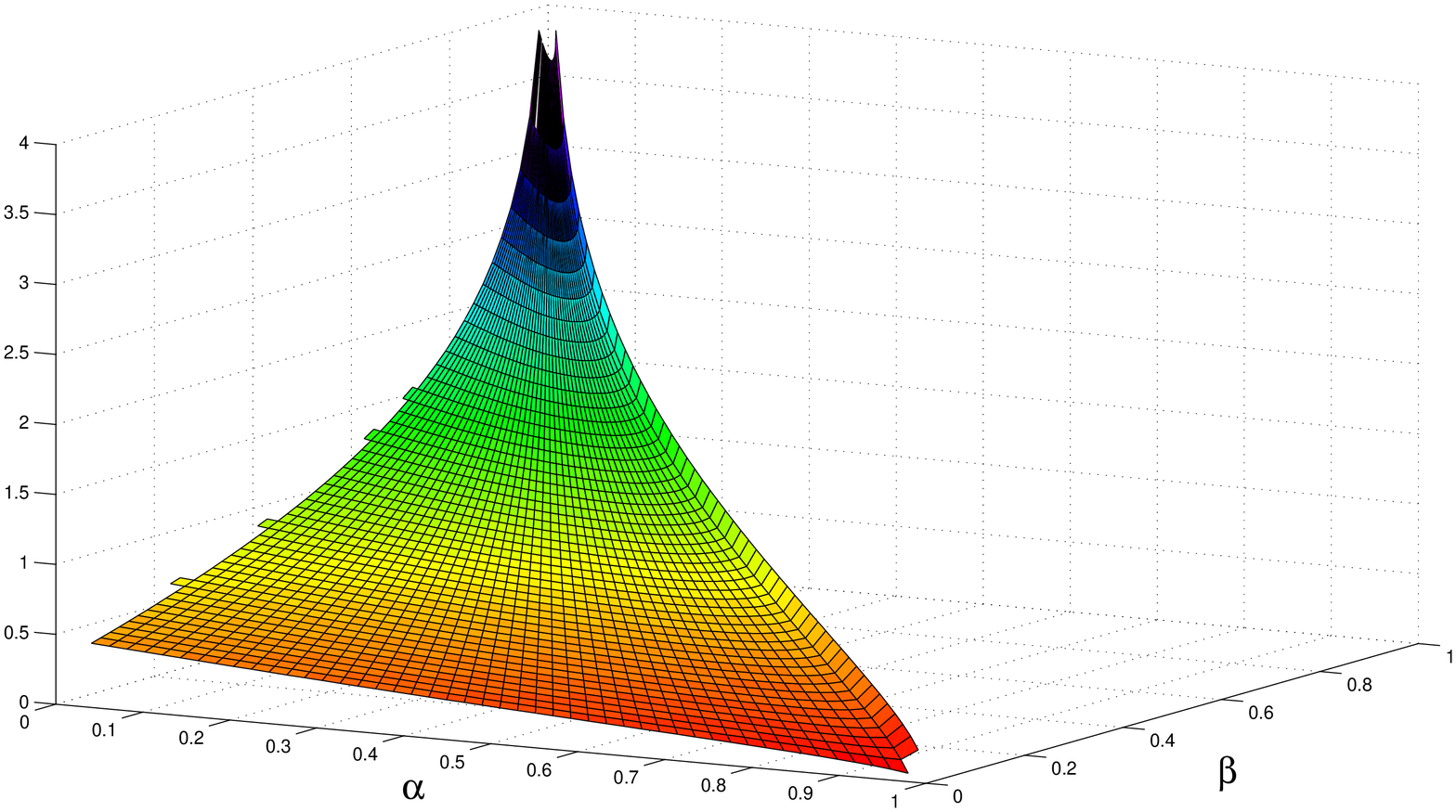} (b) } 
\centerline{
\includegraphics[width=10cm]{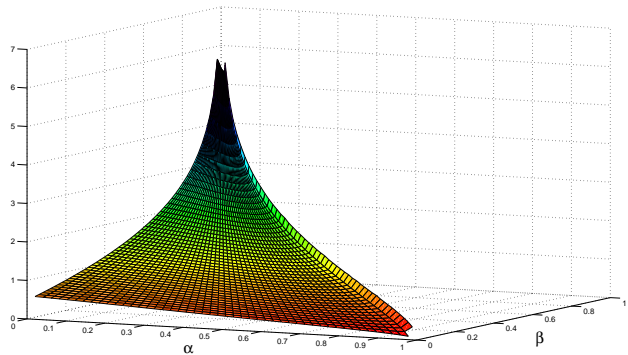} (c) }
\caption{Ratios of spectral gap: (a) $T=2$, (b) $T=3$, (c) $T=4$.
The vertical axis is graduated in a $\log_{10}$ scale.
}
\label{compa1}
\end{figure}
Notice that the relative quality of the estimates again increases with $T_d$, and already for $T_d=4$ a significant portion of parameters space lies in the area in which estimates differ by a $10^4$ factor. 
The dependence of $\rho_{T_d}$  upon $\ta$ and $\tb$ is shown in Fig.\ \ref{compa2} for $T=2,3,4$. This also clearly shows the different monotonicity properties highlighted in the previous Section.
\begin{figure}
\centerline{
\includegraphics[width=10cm]{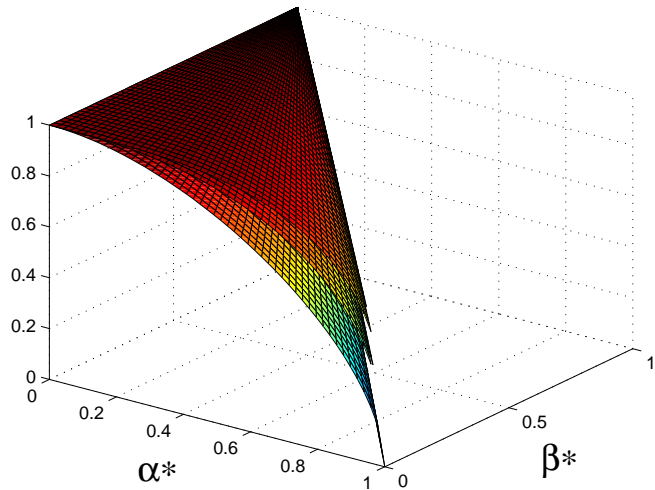} }
\caption{The function $\rho_{T_d} ( \ta, \tb)$ for $T_d=2,3,4$ (from bottom to top) }.
\label{compa2}
\end{figure}

\section{Conclusion}

We provide a novel and tight estimate of the contraction rate of infinite products of stochastic matrices, under the assumption of  prescribed lower bounds on the influence between different sets of agents which naturally arise by following the information spread along the interaction graph. 
This improves previously known bounds and, when additional information is assumed, exploits the additional structure for tightening of several orders of magnitude the previously available estimates. The other crucial factor in determining the overall convergence rate is the time $T_d$ needed to the information to propagate from some root node (which may or may not play the role of a leader) to the other nodes.
The bound can be computed as the Perron-Frobenius eigenvalue (the spectral radius) of a positive $T_d$-dimensional matrix, whose entries depend in a relatively simple way on the parameters characterizing the hypothetic lower bounds available. Some monotonicity and asymptotic properties of the bound are also proved.

\appendix
 
\section{Proofs}

\subsection{Proof of Theorem \protect\ref{listofmatrix}}
\label{pr_list}

For each given solution $x(t)$ of \eqref{tv} we define two vectors of size $T_d+1$ as follows:
\begin{equation} 
\label{comparison}
z(t)\doteq
\begin{pmatrix}
\displaystyle\max_{i \in \cN_0} x_i(t)\\
\displaystyle\max_{i \in \cN_1} x_i(t)\\
\vdots \\
\displaystyle\max_{i \in \cN_{T_d}} x_i(t)
\end{pmatrix},\quad
w(t)\doteq
\begin{pmatrix}
\displaystyle\min_{i \in \cN_0} x_i(t)\\
\displaystyle\min_{i \in \cN_1} x_i(t)\\
\vdots \\
\displaystyle\min_{i \in \cN_{T_d}} x_i(t)
\end{pmatrix}\ ,
\end{equation}
in such a way that
\[
z_k(t)= \max_{i\in\cN_{k-1}}x_i(t),\quad
w_k(t)= \min_{i\in\cN_{k-1}}x_i(t)\ .
\]
Notice that $\Delta(t)$ defined in Theorem \ref{listofmatrix} equals $z(t)-w(t)$, and that, for any $i\in\cN$, $w_{T_d+1}(t)\leq x_i(t)\leq z_{T_d+1}(t)$.

Our first aim is to compute an upper-bound of $z(t+1)$ on the basis of $z(t)$. 
Let us consider the following estimates.
First, for the unique index $i_0\in\cN_0$,
\begin{eqnarray}
\nonumber
x_{i_0} (t+1) &=& \sum_l a_{i_0,l} (t) x_l (t) = a_{i_0,i_0} (t) x_{i_0} (t) + \sum_{l\neq i_0} a_{i_0,l} (t) x_l (t) \\
\nonumber
 & \leq & a_{i_0,i_0} (t) x_{i_0} (t) + \left ( \sum_{l\neq i_0} a_{i_0,l} (t) \right ) z_{T_d+1}(t)
 = a_{i_0,i_0}(t) \, x_{i_0} (t) + ( 1- a_{i_0,i_0} (t) ) \, z_{T_d+1}(t) \\
\nonumber
& = & a_{i_0,i_0} (t) \, (x_{i_0}(t)- z_{T_d+1}(t) )   + z_{T_d+1}(t) 
\leq \alpha  \, x_{i_0} (t) + ( 1- \alpha ) \, z_{T_d+1}(t)\\
\label{x1}
& = &
\alpha  \, z_1 (t) + ( 1- \alpha ) \, z_{T_d+1}(t)\ ,
\end{eqnarray}
using the fact that $a_{i_0,i_0}(t)\geq\alpha\geq 0$ and $z_1(t)=x_{i_0}(t)\leq z_{T_d+1}(t)$.
Also, for $i\not\in\cN_0$ (that is $d_i>0$):
\begin{eqnarray}
\nonumber
x_i (t+1) &=& \sum_l a_{i,l} (t) x_l (t)
= \sum_{l\in\cN_{d_i}\setminus\cN_{d_i-1}} a_{i,l}(t) x_l (t) + \sum_{l\in\cN_{d_i-1}} a_{i,l}(t) x_l (t)+ \sum_{l\not\in\cN_{d_i}} a_{i,l}(t) x_l (t)\\
\nonumber
&\leq &
\sum_{l\in\cN_{d_i}\setminus\cN_{d_i-1}} a_{i,l}(t) x_l (t) + \sum_{l\in\cN_{d_i-1}} a_{i,l}(t) x_l (t)+ \left(
\sum_{l\not\in\cN_{d_i}} a_{i,l}(t)
\right) z_{T_d+1} (t)\\
\nonumber
&= &
\sum_{l\in\cN_{d_i}\setminus\cN_{d_i-1}} a_{i,l}(t) (x_l (t)-z_{T_d+1}(t)) + \sum_{l\in\cN_{d_i-1}} a_{i,l}(t) (x_l (t)-z_{T_d+1}(t))+ z_{T_d+1} (t)\\
\nonumber
& \leq &
\sum_{l\in\cN_{d_i}\setminus\cN_{d_i-1}} a_{i,l}(t) (z_{d_i+1}(t)-z_{T_d+1}(t)) + \sum_{l\in\cN_{d_i-1}} a_{i,l}(t) (z_{d_i}(t)-z_{T_d+1}(t))+ z_{T_d+1} (t)\\
\label{xp}
 & \leq & \beta \, z_{d_i+1}(t) + \gamma \, z_{d_i} (t) + (1 - \beta - \gamma ) \, z_{T_d+1}(t)
\end{eqnarray}
where the last inequality follows considering that $z_{d_i} (t)\leq z_{d_i+1}(t) \leq z_{T_d+1}(t)$ and $\beta \leq \sum_{l\in\cN_{d_i}\setminus\cN_{d_i-1}} a_{i,l}(t)$, $\gamma \leq \sum_{l\in\cN_{d_i-1}} a_{i,l}(t)$, by Assumption \ref{as1}.

We now proceed to compute suitable estimates for the vector $z$ defined in \eqref{comparison}.
In particular, for any $k\leq T_d$, we may derive, exploiting \eqref{x1} and \eqref{xp}:
\[
\max_{i\in\cN_k} x_i (t+1)
\leq \max \big \{ \alpha z_1 (t) + (1 - \alpha) z_{T_d+1} (t),\
\beta z_{k+1}(t) + \gamma z_k (t)+ (1 - \beta - \gamma)\ z_{T_d+1}(t) \big  \}\ .
\]
One has $z_1(t)\leq z_k(t)\leq z_{k+1}(t)\leq z_{T_d+1}(t)$.
Hence, the previous inequality implies:
\[
z_{k+1}(t+1)
\leq
\max \big \{ \alpha z_k (t) + (1 - \alpha) z_{T_d+1} (t),\
\beta z_{k+1}(t) + \gamma z_k (t)+ (1 - \beta - \gamma)\ z_{T_d+1}(t) \big  \}
\]
and thus
\begin{multline*}
z_{k+1}(t+1)
\leq
\min \big \{ \beta, \max \{ \alpha- \gamma , 0 \} \big \}\ z_{k+1}(t)  + \min \{ \alpha,\gamma \}\ z_k (t)\\
+ \big (1- \min \{\alpha, \gamma \} - \min \{ \beta, \max \{ \alpha- \gamma , 0 \} \big \} \big )\ z_{T_d+1}(t)\ ,
\end{multline*}
that is:
\begin{equation}
\label{comp2b}
z_{k+1}(t+1)
\leq
\tb\ z_{k+1}(t)  + \ta\ z_k (t)
+ \big (1- \ta-\tb \big )\ z_{T_d+1}(t)
\end{equation}
with the nomenclature adopted in \eqref{tatb}.

A symmetric argument can be carried out for the minima $w$ defined in \eqref{comparison}.
In analogy to the formulas \eqref{x1}, \eqref{xp} obtained in the previous
paragraphs, we get:
\begin{eqnarray}
\nonumber
x_{i_0} (t+1) &=& \sum_l a_{i_0,l} (t) x_l (t) = a_{i_0,i_0} (t) x_{i_0} (t) + \sum_{l\neq i_0} a_{i_0,l} (t) x_l (t) \\
\nonumber
 & \geq & a_{i_0,i_0} (t) x_{i_0} (t) + \left ( \sum_{l\neq i_0} a_{i_0,l} (t) \right ) w_{T_d+1}(t)
 = a_{i_0,i_0}(t) \, x_{i_0} (t) + ( 1- a_{i_0,i_0} (t) ) \, w_{T_d+1}(t) \\
& = &
\nonumber
a_{i_0,i_0} (t) \, (x_{i_0}(t)- w_{T_d+1}(t) )   + w_{T_d+1}(t) 
\geq \alpha  \, w_1 (t) + ( 1- \alpha ) \, w_{T_d+1}(t)\ ,
\end{eqnarray}
and, for $i\not\in\cN_0$,
\begin{eqnarray}
\nonumber
x_i (t+1) &=& \sum_l a_{i,l} (t) x_l (t)
= \sum_{l\in\cN_{d_i}\setminus\cN_{d_i-1}} a_{i,l}(t) x_l (t) + \sum_{l\in\cN_{d_i-1}} a_{i,l}(t) x_l (t)+ \sum_{l\not\in\cN_{d_i-1}\cup\{i\}} a_{i,l}(t) x_l (t)\\
\nonumber
&\geq &
\sum_{l\in\cN_{d_i}\setminus\cN_{d_i-1}} a_{i,l}(t) x_l (t) + \sum_{l\in\cN_{d_i-1}} a_{i,l}(t) x_l (t)+ \left(
\sum_{l\not\in\cN_{d_i-1}\cup\{i\}} a_{i,l}(t)
\right) w_{T_d+1} (t)\\
\nonumber
&= &
\sum_{l\in\cN_{d_i}\setminus\cN_{d_i-1}} a_{i,l}(t) (x_l (t)-w_{T_d+1}(t)) + \sum_{l\in\cN_{d_i-1}} a_{i,l}(t) (x_l (t)-w_{T_d+1}(t))+ w_{T_d+1} (t)\\
\nonumber
& \geq &
\sum_{l\in\cN_{d_i}\setminus\cN_{d_i-1}} a_{i,l}(t) (w_{d_i+1}(t)-w_{T_d+1}(t)) + \sum_{l\in\cN_{d_i-1}} a_{i,l}(t) (w_{d_i}(t)-w_{T_d+1}(t))+ w_{T_d+1} (t)\\
\nonumber
 & \geq & \beta \, w_{d_i+1}(t) + \gamma \, w_{d_i} (t) + (1 - \beta - \gamma ) \, w_{T_d+1}(t)\ .
 \end{eqnarray}

Similarly to \eqref{comp2b}, we get
\begin{equation*}
\label{comp2bbis}
w_{k+1}(t+1)
\geq
\tb\ w_{k+1}(t)  + \ta\ w_k (t)
+ \big (1- \ta-\tb \big )\ w_{T_d+1}(t)\ .
\end{equation*}

Putting now together \eqref{comp2b} and \eqref{comp2bbis} leads to:
\begin{equation}
\label{ineqD}
\Delta_{k+1}(t+1)
\leq
\tb\ \Delta_{k+1}(t)  + \ta\ \Delta_k (t)
+ \big (1- \ta-\tb \big )\ \Delta_{T_d+1}(t)\ .
\end{equation}
On the other hand, one also have
\[
\Delta_0(t+1)=0=\Delta_0(t)\ .
\]
Gathering this inequalities yields \eqref{ineqdiameter} and proves Theorem \ref{listofmatrix}.

\begin{rema}
\label{here}
Notice that alternatively to \eqref{comp2b}, the following estimate is also valid:
\begin{eqnarray*}
\nonumber
\max_{i\in\cN_k} x_i (t+1)
& \leq &
\max \big \{ \alpha z_1 (t) + (1 - \alpha) z_{T_d+1} (t),\
\beta z_{k+1}(t) + \gamma z_k (t)+ (1 - \beta - \gamma)\ z_{T_d+1}(t) \big  \}\\
& \leq &
\min  \{ \alpha, \beta \}\ z_{k+1}(t)  + \min \big \{ \gamma, \max \{ \alpha- \beta , 0 \} \big \}\ z_k (t)\\
& &
+ \big (1- \min \{\alpha, \beta \} - \min \{ \gamma, \max \{ \alpha- \beta , 0 \} \big \} \big )\ z_{T_d+1}(t)\ .
\end{eqnarray*}
This yields the result of Theorem \ref{listofmatrix} with
$\ta
= \min \{ \gamma, \max \{ \alpha - \beta, 0 \} \}$,
$\tb
= \min \{ \alpha, \beta \}$
instead of \eqref{tatb}, but the sequel demonstrates that the corresponding estimates are less precise (see Remark \ref{convexfreedom2} below).
\hfill$\square$
\end{rema}

\subsection{Proof of Theorem \protect\ref{th2}}
\label{pr_th2}

All the factors in \eqref{ineqdiameter} being nonnegative, the order relation is compatible with multiplication.
One then obtains, for all $t\in\Nset$,
\[
\Delta(t)
\leq  \left(
\begin{array}{c|c}
1 & 0_{T_d\times 1}\\
\hline
\begin{matrix}
\ta \\ 0_{(T_d-1)\times 1}
\end{matrix} &
\zeta_{T_d} (\ta, \tb)
\end{array}
\right)^t\Delta(0)
\leq  \left(
\begin{array}{c|c}
1 & 0_{T_d\times 1}\\
\hline
\begin{matrix}
\ta \\ 0_{(T_d-1)\times 1}
\end{matrix} &
\zeta_{T_d} (\ta, \tb)
\end{array}
\right)^t\begin{pmatrix}
0 \\ \bfo^{T_d}
\end{pmatrix}
\Delta_{T_d+1}(0)\ ,
\]
where the fact that $\Delta_1(t)\equiv 0\leq\Delta_k(t)\leq \Delta_{k+1}(t)\leq \Delta_{T_d+1}(t)$, $1\leq k\leq T_d+1$, have been taken into account.

One deduces that
\[
\Delta(t)
\leq \begin{pmatrix}
0_{T_d\times 1}\\ \zeta_{T_d} (\ta, \tb)^t
\end{pmatrix}
\bfo^{T_d}\Delta_{T_d+1}(0)
\]
and
\[
\Delta_{T_d+1}(t)
= e_{T_d+1}^{(T_d+1)\mbox{\tiny\sf T}}\Delta(t)
\leq e_{T_d}^{T_d\mbox{\tiny\sf T}}\zeta_{T_d} (\ta, \tb)^t \bfo^{T_d}\Delta_{T_d+1}(0)\ ,
\]
from which it ensues
\begin{multline*}
\limsup_{t\to +\infty}
\left(
\frac{\Delta_{T_d+1}(t)}{\Delta_{T_d+1}(0)}
\right)^{1/t}
\leq
\limsup_{t\to +\infty}
\left(
e_{T_d}^{T_d\mbox{\tiny\sf T}}\zeta_{T_d} (\ta, \tb)^t \bfo^{T_d}
\right)^{1/t}\\
\leq
\limsup_{t\to +\infty}
\left\|
\zeta_{T_d} (\ta, \tb)^t
\right\|^{1/t}
= \lambda_{\max} ( \zeta_{T_d} ( \ta ( \lambda ), \tb( \lambda ) ) )\ ,
\end{multline*}
see e.g.\ \cite[Corollary 5.6.14]{HOR90} for a demonstration of the previous equality.
This yields \eqref{spectralbound} and achieves the proof of Theorem \ref{th2}.

\begin{rema}
\label{convexfreedom2}
Recall that by virtue of Remark \ref{here}, one is allowed to choose $\ta$ and $\tb$ 
according to \eqref{tatb} or to
\[
\tb\doteq \min \{ \alpha, \beta \},\quad
\ta\doteq \min \{\beta+\gamma,\alpha\}-\tb\ .
\]
Besides, due to linearity of $C(t)$ with respect to $\ta$ and $\tb$, any convex combination of the two formulas can still be adopted as a suitable value of $(\ta,\tb)$ for the comparison system.
In particular then:
\begin{equation*}
\label{convexexpression}
\begin{array}{rcl}
\ta ( \lambda )&=& \lambda \min \{ \alpha, \gamma \} + (1- \lambda )  \min \{ \gamma, \max \{ \alpha - \beta, 0 \} \} \\
\tb ( \lambda )& = & \lambda \min \{ \beta, \max \{ \alpha- \gamma, 0 \} \} + ( 1 - \lambda ) \min \{ \alpha, \beta \} \\
\end{array}
\end{equation*}
Notice also that, no matter how $\lambda \in [0,1]$ is selected, the following quantity is constant:
\[ \ta ( \lambda ) + \tb ( \lambda ) = \min \{ \beta + \gamma , \alpha \} \]
So, it is natural to wonder where the minimum for $\lambda_{\max} ( \zeta_{T_d} ( \ta ( \lambda ), \tb( \lambda ) ) )$ is achieved.
We establish now that the optimal choice is the one presented in Theorem \ref{th2}.
This is indeed natural, as it amounts to maximize in \eqref{ineqD} the contribution of the diameter $\Delta_k(t)$ of the smallest set $\cN_k$.

It is well known that, in general, the spectral radius of a linear combination of matrices needs {\em not\/} be a convex function; see e.g.\ \cite[p.\ 364]{HOR94}.
The additional structure provided by this problem, however allows to exactly identify the optimal value of $\lambda$ as explained below.
Consider the cone
\begin{equation}
\label{cone}
K_{T_d} \doteq \{ w : 0\leq w_1 \leq w_2 \leq w_3 \leq \ldots \leq w_{T_d} \}\ ,
\end{equation}
which is contained in the nonnegative orthant.
It is straightforward to verify that $ \zeta_{T_d} ( \ta, \tb ) K_{T_d} \subseteq K_{T_d}$. In particular, then, by Perron-Frobenius theory \cite[Theorem 3.1]{VAN}, we know that $K_{T_d}$ contains an eigenvector, say $\hat{w}$, relative to the dominant eigenvalue of $\zeta_{T_d} ( \ta, \tb )$.

Moreover application of a classical characterization of the spectral radius $\lambda_{\max} ( \zeta_{T_d} ( \ta,\tb) )$ \cite[Theorem 4]{RHE}, see also \cite{FOR}, yields:
\[
\lambda_{\max} ( \zeta_{T_d} ( \ta,\tb) )
= \max_{w \geq 0}\ \min\left\{
 \frac{ [ \zeta_{T_d} ( \ta, \tb ) w ]_i }{w_i}\ :\ i\in\{1,\dots, T_d\},\ w_i \neq 0
 \right\}\ .
 \]
The previous formula is established considering the nonnegative orthant ($w\geq 0$).

Now, the matrix $ \zeta_{T_d} ( \ta,\tb)$ is irreducible for the cone constituted by the nonnegative orthant (that is: irreducible in the usual sense).
One deduces that it admits exactly {\em one\/} eigenvector in the interior of the positive orthant \cite[Theorem 4.2]{VAN}.
The latter has to be $\hat{w}$, and we thus have

\begin{eqnarray*}
\min\left\{
 \frac{ [ \zeta_{T_d} ( \ta, \tb ) \hat{w} ]_i }{\hat{w}_i}\ :\ i\in\{1,\dots, T_d\}
 \right\}
& = &
\max_{w \geq 0}\ \min\left\{
 \frac{ [ \zeta_{T_d} ( \ta, \tb ) w ]_i }{w_i}\ :\ i\in\{1,\dots, T_d\},\ w_i \neq 0
 \right\}\\
 \\
 & \geq &
 \max_{w \in K_{T_d}}\min\left\{
 \frac{ [ \zeta_{T_d} ( \ta, \tb ) w ]_i }{w_i}\ :\ i\in\{1,\dots, T_d\},\ w_i \neq 0
 \right\}\\
 & \geq &
 \min\left\{
 \frac{ [ \zeta_{T_d} ( \ta, \tb ) \hat{w} ]_i }{\hat{w}_i}\ :\ i\in\{1,\dots, T_d\}
 \right\} \ ,
\end{eqnarray*}
using the fact that $\hat{w}$ is also an element of $K_{T_d}$.
In the previous series of inequalities, $w\geq 0$ refers to the componentwise order relation, and the fact that $K_{T_d}$ is included in the nonnegative orthant has been used.
We thus deduce:

\begin{equation}
\label{eigenvalue} 
\lambda_{\max} ( \zeta_{T_d} ( \ta,\tb) )
=  \max_{w \in K_{T_d}}\min\left\{
 \frac{ [ \zeta_{T_d} ( \ta, \tb ) w ]_i }{w_i}\ :\ i\in\{1,\dots, T_d\},\ w_i \neq 0
 \right\} \ .
\end{equation}

Now, the following expression holds for $ \zeta_{T_d} ( \ta ( \lambda ), \tb ( \lambda ) )$,
\begin{equation*}
\label{monotonicity}
 \zeta_{T_d} ( \ta ( \lambda ), \tb ( \lambda ) ) =  \min \{ \beta + \gamma , \alpha \} \bfo e_{T_d}\t  + \tb ( \lambda ) I_{T_d} + \ta ( \lambda ) J_{T_d}\ .
 \end{equation*} 
Hence, for all $w \in K_{T_d}$ the function $ \zeta_{T_d} ( \ta ( \lambda ), \tb ( \lambda ) ) w$ is (entrywise) monotone with respect to $\lambda$.
Henceforth, by \eqref{eigenvalue},
$\lambda_{\max}(\zeta_{T_d} ( \ta ( \lambda ), \tb ( \lambda ) ))$ is also monotone with respect to $\lambda$.
In particular, the minimum is achieved when $\ta$ is maximum. In our notation this corresponds to $\lambda = 1$.
In conclusion, the optimal estimate is the one obtained in Theorem \ref{th2}.
\hfill$\square$
\end{rema}

\subsection{Proof of Theorem \protect\ref{th3}}
\label{pr_th3}

\noindent $\bullet$
Let the characteristic polynomial be $\chi_T(s)\doteq\det (sI_T-(1-\ta-\tb)\ \bfo e_T\t -\tb I_T-\ta J_T)$.
The matrix $(1-\ta-\tb)\ \bfo e_T\t +\tb I_T+\ta J_T$ being irreducible and nonnegative, its spectral radius is indeed a (real) eigenvalue; see \cite[Theorem 8.4.4]{HOR90}.
We thus aim to estimate the largest zero of $\chi_T(s)$, that we denote $\rho_T(\ta,\tb)$.

Developing the determinant with respect to its first column yields, for any $T\geq 2$,
\begin{equation}
\label{det1}
\chi_T(s) = (s-\tb)\chi_{T-1}(s)-{\ta}^{T-1}(1-\ta-\tb)\ .
\end{equation}

It is then easy to check that
\begin{equation*}
\label{det2}
\chi_T(s)
=
(s-\tb)^T+ (\ta+\tb-1)\left(
(s-\tb)^{T-1}+\ta(s-\tb)^{T-2}+\dots+{\ta}^{T-2}(s-\tb)+{\ta}^{T-1}
\right)\ ,
\end{equation*}
and that $\chi_T(s)=0$ if and only if \eqref{real} holds.
In this relation, the coefficient $\frac{1-\tb}{\ta}$ is at most equal to 1.
One sees directly that the scalar $\rho_T(\ta,\tb)$, the largest root of \eqref{real}, is thus smaller than 1.

\noindent $\bullet$
The fact that $\chi_1(s)=(s-1+\ta)$ and formula \eqref{det1} permit to show recursively that, for any $T\geq 2$,
\[
\chi_T(1-\ta),\ \chi_T(\tb)\leq 0\ .
\]
As
\[
\lim_{s\to +\infty}\chi_T(s) =+\infty
\]
(in which the sign of the limit is crucial), one then deduces that, for any $T\geq 1$,
\[
\rho_T(\ta,\tb)\geq 1-\ta,\tb\ .
\]

\comment{
`Completing' $(1-\ta-\tb)\ \bfo e_1\t +\tb I_T+\ta J_T$ to obtain a $(T+1)\times (T+1)$ matrix by putting $(1-\ta-\tb)$ in the $(T+1, 1)$-component; $\tb$ in the $(T+1,T+1)$-component; and $\ta$ in the $(T,T+1)$-component yields a {\em stochastic\/} matrix with second largest eigenvalue smaller than 1.
Thus, we have, for all $T\geq 1$
\[
\rho_T(\ta,\tb)< 1\ .
\]
}

\noindent $\bullet$
One deduces from \eqref{det1} that
\[
\chi_{T-1}(s)=0\Rightarrow\chi_T(s)=-{\ta}^{T-1}(1-\ta-\tb)\leq 0\ ,
\]
from which we conclude that, for $T\geq 2$,
\[
\rho_T(\ta,\tb)\geq \rho_{T-1}(\ta,\tb)\ .
\]

\noindent $\bullet$
The fact that $\rho_T(\ta,\tb)\leq\ta+\tb$ if and only if $T\leq\frac{\ta}{1-\ta-\tb}$ is obtained from the observation that
\[
\frac{1}{\alpha^{\star T}}\chi_T(\ta+\tb)
=
(T+1)-\frac{1-\tb}{\ta} T = \frac{1}{\ta}((\ta+\tb-1)T +\ta)\ .
\]

\noindent $\bullet$
Let us now determine the limit of $\rho_T(\ta,\tb)$ when $T\to +\infty$.
As asserted by the inequality just proved, one may assume, for large enough $T$, that $\rho_T(\ta,\tb)>\ta+\tb$.
In these conditions, one may multiply by $\left(
\frac{\rho_T(\ta,\tb)-\tb}{\ta}-1
\right)$ both sides of \eqref{real} taken in $s=\rho_T(\ta,\tb)$.
This yields the identity:
\[
\left(
\frac{\rho_T(\ta,\tb)-\tb}{\ta}
\right)^{T+1}-1
=
\left(
\frac{1-\tb}{\ta}
\right)
\left(\left(
\frac{\rho_T(\ta,\tb)-\tb}{\ta}
\right)^T-1
\right)\ ,
\]
that is:
\begin{equation}
\label{iden}
(\rho_T(\ta,\tb)-1)
\left(
\frac{\rho_T(\ta,\tb)-\tb}{\ta}
\right)^T
=
(\ta+\tb-1)>0
\ .
\end{equation}

The results in Theorem \ref{th3} show that the factor $(\rho_T(\ta,\tb)-1)$ is bounded when $T\to +\infty$.
On the other hand, the term $\left(
\frac{\rho_T(\ta,\tb)-\tb}{\ta}
\right)^T$ either tends to zero, or to infinity.
In these circumstances, in order for the previous identity to be verified for $T$ sufficiently large, the only possibility is that, when $T\to +\infty$,
\[
(\rho_T(\ta,\tb)-1)\to 0,\quad
\left(
\frac{\rho_T(\ta,\tb)-\tb}{\ta}
\right)^T\to +\infty\ .
\]
Considering then that
\[
\left(
\frac{\rho_T(\ta,\tb)-\tb}{\ta}
\right)^T
\sim
\left(
\frac{1-\tb}{\ta}
\right)^T\ ,
\]
identity \eqref{iden} yields the announced expansion formula.

\subsection{Proof of Theorem \protect\ref{th4}}
\label{pr_th4}

\noindent $\bullet$
As exposed in Remark \ref{convexfreedom2}, $\rho_T(\ta,\tb)$ is the largest eigenvalue of $\zeta_T(\ta,\tb)$, associated to a single eigenvector in $\Rset^T$, say $\hat{w}$, contained in the cone $K_T$ defined in \eqref{cone}.

By the very definition of this matrix,
\[
\zeta_T(\ta,\tb)\hat{w} = \rho_T(\ta,\tb) \hat{w}\ .
\]
For any other pair $(\alpha^{\star\star},\beta^{\star \star})$ and for any $i\in\{2,\dots, T\}$, one has
\[
[\zeta_T(\alpha^{\star\star},\beta^{\star \star})\hat{w}]_i
= (1-\alpha^{\star\star}-\beta^{\star \star})[\hat{w}]_T
+\beta^{\star\star}[\hat{w}]_i
+\alpha^{\star\star}[\hat{w}]_{i-1}\ .
\]
Thus,
\begin{eqnarray*}
[\zeta_T(\alpha^{\star\star},\beta^{\star \star})\hat{w}]_i
-[\zeta_T(\ta,\tb)\hat{w}]_i
& = &
(\ta+\tb-\alpha^{\star\star}-\beta^{\star \star})[\hat{w}]_T
+(\beta^{\star\star}-\tb)[\hat{w}]_i
+(\alpha^{\star\star}-\ta)[\hat{w}]_{i-1}\\
& = &
(\beta^{\star\star}-\tb)([\hat{w}]_i-[\hat{w}]_T)
+(\alpha^{\star\star}-\ta)([\hat{w}]_{i-1}-[\hat{w}]_T)\ .
\end{eqnarray*}
Taken into account the fact that $\hat{w} \in K_T$, which implicates $[\hat{w}]_i-[\hat{w}]_T, [\hat{w}]_{i-1}-[\hat{w}]_T\leq 0$, one deduces that: if  $(\alpha^{\star\star},\beta^{\star \star})\leq(\ta,\tb)$, then, for $i=2,\dots, T_d$,
\[
[\zeta_T(\alpha^{\star\star},\beta^{\star \star})\hat{w}]_i
\geq [\zeta_T(\ta,\tb)\hat{w}]_i
= \rho_T(\ta,\tb)[\hat{w}]_i\ .
\]

Similarly, for $i=1$, one has
\[
[\zeta_T(\alpha^{\star\star},\beta^{\star \star})\hat{w}]_1
= (1-\alpha^{\star\star}-\beta^{\star \star})[\hat{w}]_T
+\beta^{\star\star}[\hat{w}]_1\ ,
\]
and
\begin{eqnarray*}
[\zeta_T(\alpha^{\star\star},\beta^{\star \star})\hat{w}]_1
-[\zeta_T(\ta,\tb)\hat{w}]_1
& = &
(\ta+\tb-\alpha^{\star\star}-\beta^{\star \star})[\hat{w}]_T
+(\beta^{\star\star}-\tb)[\hat{w}]_1\\
& = &
(\beta^{\star\star}-\tb)([\hat{w}]_1-[\hat{w}]_T)
-(\alpha^{\star\star}-\ta)[\hat{w}]_T\ ,
\end{eqnarray*}
so
\[
[\zeta_T(\alpha^{\star\star},\beta^{\star \star})\hat{w}]_1
\geq [\zeta_T(\ta,\tb)\hat{w}]_1
= \rho_T(\ta,\tb)[\hat{w}]_1\ .
\]

To summarize, we thus have:
\[
\zeta_T(\alpha^{\star\star},\beta^{\star \star})\hat{w} \geq \rho_T(\ta,\tb) \hat{w}\ ,
\]
and the fact that $\zeta_T(\alpha^{\star\star},\beta^{\star \star})$ has nonnegative components certainly implies \cite[Theorem 4]{RHE} that
\[
\rho_T(\ta,\tb) \leq \lambda_{\max}(\zeta_T(\alpha^{\star\star},\beta^{\star \star}))
=\rho_T(\alpha^{\star\star},\beta^{\star \star})\ .
\]
The nonincreasingness of $\rho_T(\ta,\tb)$ with respect to $(\ta,\tb)$ is thus proved.

\noindent $\bullet$
Careful examination of \eqref{tatb} shows that
\[
\frac{\partial\ta}{\partial\alpha}, \frac{\partial\ta}{\partial\gamma}, \frac{\partial\tb}{\partial\alpha}, \frac{\partial\tb}{\partial\beta}, -\frac{\partial\tb}{\partial\gamma}\in\{0,1\},\quad
\frac{\partial\ta}{\partial\beta}=0\ .
\]

One thus shows easily, with the help of the previous result, that $\rho_T(\alpha,\beta,\gamma)$ is nonincreasing in $\alpha$ and in $\beta$.

The study of the influence of $\gamma$ requires to distinguish between three different cases.
Based on \eqref{tatb}, one has:

\begin{itemize}
\item if $\alpha\leq\gamma$, then $\ta=\alpha$, $\tb=\beta$, so that $\rho_T(\alpha,\beta,\gamma)$ is locally insensitive to variations of $\gamma$;

\item if $\alpha\geq\beta+\gamma$, then $\ta=\gamma$, $\tb=\beta$; so that an increase of $\gamma$ induces an increase of $\ta$, does not affect $\tb$, and therefore provokes a decrease of $\rho_T(\alpha,\beta,\gamma)$.

\item in the intermediate case where $\gamma\leq\alpha\leq\beta+\gamma$, one has $\ta=\gamma$, $\tb=\alpha-\gamma$, so that the rationale developed in Remark \ref{convexfreedom2} holds, indicating a decrease of $\rho_T(\alpha,\beta,\gamma)$ with respect to $\ta$, i.e.\ with respect to $\gamma$.
\end{itemize}

\noindent $\bullet$
We now demonstrate the comparison relation stated in Theorem \ref{th4} for constant value of $\beta+\gamma$.
When this quantity, together with $\alpha$, is unchanged, the sum $\ta+\tb$ is conserved.
Arguing one more time as in Remark \ref{convexfreedom2}, $\rho_T(\ta,\tb)$ is nonincreasing with respect to $\ta=\min\{\alpha,\gamma\}$, and thus nondecreasing with respect to $\beta$.

\noindent $\bullet$
Concerning the computation of $\rho_T(\alpha,\beta,\gamma)$ when $\alpha$ or $\gamma$ is null, it is straightforward when remarking that in both cases, $\ta=0$.

Conversely, due to the monotonicity property already established, the largest value of $\rho_T(\alpha,\beta,\gamma)$ is attained for extremal values.
The nullity of $\beta$ alone, which yields $\ta=\min\{\alpha,\gamma\}$, $\tb=0$, does not induce that the maximal value is attained (see \eqref{real}).
Also, when $\alpha=0$ but $\gamma\neq 0$ (resp.\ $\gamma=0$ but $\alpha\neq 0$), then $\frac{\partial\ta}{\partial\alpha}=1$ (resp.\ $\frac{\partial\ta}{\partial\gamma}=1$), and any increase of $\alpha$ (resp.\ $\gamma$) produces a decrease of $\rho_T(\alpha,\beta,\gamma)$.

\noindent $\bullet$
Last, when $\alpha=1$ and $\beta+\gamma=1$, then $\ta=1-\beta$, $\tb=\beta$, and the matrix $\zeta_T(\ta,\tb)$ is upper triangular, with $\beta$ on the diagonal.

Reversely, decrease of $\alpha$ or $\beta$ (resp.\ $\gamma$) implies increase with respect to $1-\gamma$ (resp.\ $\beta$).

\end{document}